\numberwithin{equation}{section}
\begin{document}
\newtheorem*{theorem}{Theorem}
\newtheorem{lemma}{Lemma}
\newtheorem*{corollary}{Corollary}
\numberwithin{equation}{section}
\newcommand{\dif}{\mathrm{d}}
\newcommand{\intz}{\mathbb{Z}}
\newcommand{\ratq}{\mathbb{Q}}
\newcommand{\natn}{\mathbb{N}}
\newcommand{\comc}{\mathbb{C}}
\newcommand{\rear}{\mathbb{R}}
\newcommand{\prip}{\mathbb{P}}
\newcommand{\uph}{\mathbb{H}}
\newcommand{\fief}{\mathbb{F}}
\newcommand{\majorarc}{\mathfrak{M}}
\newcommand{\minorarc}{\mathfrak{m}}
\newcommand{\sings}{\mathfrak{S}}

\title{On Primes Represented by Cubic Polynomials}
\author{Timothy Foo \and Liangyi Zhao}
\date{\today}
\maketitle

\begin{abstract}
In this paper, we prove a theorem on the distribution of primes in cubic progressions on average.
\end{abstract}

\noindent {\bf Mathematics Subject Classification (2010)}: 11L05, 11L07, 11L15, 11L20, 11L40, 11N13, 11N32, 11N37 \newline

\noindent {\bf Keywords}: primes in cubic progressions, primes represented by polynomials


\section{Introduction}

It was proved by Dirichlet that any linear polynomial in one variable represents infinitely many primes provided that the coeffiicents are co-prime.  Though it has long been conjectured, analogous statements are not known for any polynomial of degree two or higher.  Many partial results are known in this direction.  The readers are referred to the survey article \cite{BZ2} for more information. \newline

In \cites{SBLZ, BZ}, S. Baier and L. Zhao established certain theorems for the Bateman-Horn conjecture for quadratic polynomials on average.  The latest result in those works states the following.  For all $z >3$ and $B>0$, we have, for $z^{1/2 + \varepsilon}\leq K \leq z/2$
\[ \sum_{1 \leq k \leq K}\left|\sum_{z < n^2+k \leq 2z}\Lambda(n^2+k) -  \prod_{p>2} \left( 1 - \frac{\left(\frac{-k}{p}\right)}{p-1} \right) \sum_{z < n^2+k \leq 2z}1\right|^2 \ll \frac{Kz}{(\log z)^B}, \]
where $\Lambda$ henceforth denotes the von Mangoldt function and $\left( \frac{-k}{p} \right)$ is the Legendre symbol. \newline

The Bateman-Horn conjecture \cite{PTBRAH} says that if $f$ is an irreducible polynomial in $\intz[x]$ satisfying
\[ \gcd\{f(n): n \in \intz\}=1, \]
then
\begin{equation} \label{bhconj}
 \sum_{n \leq x} \Lambda(f(n)) \sim \prod_p \left( 1- \frac{n_p-1}{p-1} \right) x,
 \end{equation}
where $n_p$ is the number of solutions to the equation $f(n) \equiv 0 \pmod{p}$ in $\intz/p \intz$. A little more can be said about the quantity $n_p$ in the specific case where $f(n)=n^d+k$. Specifically, in that case, we have that
\[ n_p = \sum_{i=0}^{d^{\prime}-1}\chi^{i}(-k) \]
where $\chi$ is a Dirichlet character of order $d^{\prime}=\gcd (p-1,d)$. \newline

In this paper, we shall study the asymptotic formula in \eqref{bhconj} on average over a family of cubic polynomials.  Our main theorem is the following.
\begin{theorem}
Given $A,B > 0$, we have, for $x^3 (\log x)^{-A}< y < x^3$,
\[ \sum_{\substack{1 \leq k \leq y\\ \mu^2(k) = 1}}\left| \sum_{n \leq x}\Lambda(n^3 + k) - \sings (k)x\right|^2 = O\left(\frac{yx^2}{(\log x)^B}\right), \]
where $\mu$ henceforth denotes the M\"{o}bius function and
\[ \sings(k) = \prod_{p}\left(1-\frac{n_p-1}{p-1}\right), \]
with $n_p$ being the number of solutions to the equation $n^3+k \equiv 0 \bmod p$ in $\intz/p\intz$.
\end{theorem}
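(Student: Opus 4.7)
The plan is to use the Hardy--Littlewood circle method, with the second moment over $k$ handled by Parseval. Set $e(\theta) = e^{2\pi i \theta}$, $N = 2\lceil x^3\rceil$, and introduce the generating functions
\[ T(\alpha) := \sum_{n \leq x} e(\alpha n^3), \qquad S(\alpha) := \sum_{m \leq N} \Lambda(m) e(\alpha m). \]
Orthogonality gives
\[ R(k) := \sum_{n\leq x}\Lambda(n^3+k) = \int_0^1 T(\alpha) S(-\alpha)e(\alpha k)\,\dif\alpha. \]
Let $P := (\log x)^C$ with $C = C(A,B)$ to be fixed later, and decompose $[0,1) = \majorarc \sqcup \minorarc$ in the usual way: $\majorarc$ is the disjoint union of arcs $\{|\alpha - a/q|\leq P/x^3\}$ over reduced fractions $a/q$ with $q \leq P$.

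The minor arc contribution $R_\minorarc(k) := \int_\minorarc T(\alpha) S(-\alpha)e(\alpha k)\,\dif\alpha$ is handled very efficiently. Since $\{e(\alpha k)\}_{k\in\intz}$ is orthonormal in $L^2([0,1))$, applying Bessel's inequality to $\alpha \mapsto T(\alpha)S(-\alpha)\mathbbm{1}_{\minorarc}(\alpha)$ yields
\[ \sum_{1\leq k\leq y}|R_\minorarc(k)|^2 \leq \int_\minorarc|T(\alpha)|^2|S(\alpha)|^2\,\dif\alpha \leq \Bigl(\sup_{\alpha \in \minorarc}|T(\alpha)|\Bigr)^{2}\int_0^1 |S(\alpha)|^2\,\dif\alpha. \]
A Weyl--Vinogradov estimate for cubic exponential sums, combined with Dirichlet approximation (which forces $q > P$ on $\minorarc$), gives $\sup_\minorarc|T| \ll x(\log x)^{-D(C)}$ with $D(C) \to \infty$ as $C \to \infty$; Parseval yields $\int_0^1|S|^2 = \sum_{m\leq N}\Lambda(m)^2 \ll x^3\log x$. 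Choosing $C$ sufficiently large in terms of $A$ and $B$ makes this contribution $\ll yx^2(\log x)^{-B}$, using the lower bound $y\geq x^3(\log x)^{-A}$.

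On each major arc write $\alpha = a/q + \beta$ and substitute the classical approximations
\[ T\bigl(\tfrac{a}{q}+\beta\bigr) = \frac{G(a;q)}{q}\int_0^x e(\beta t^3)\,\dif t + O(q^{1/2+\varepsilon}), \]
\[ S\bigl(\tfrac{a}{q}+\beta\bigr) = \frac{\mu(q)}{\phi(q)}\int_0^N e(\beta t)\,\dif t + O\bigl(N e^{-c\sqrt{\log N}}\bigr), \]
where $G(a;q) := \sum_{r\bmod q}e(ar^3/q)$ and the second bound comes from the Siegel--Walfisz theorem. Summing over $a\pmod q$ produces a main term $\sings_P(k)\cdot x$ with
\[ \sings_P(k) := \sum_{q\leq P}\frac{\mu(q)}{\phi(q)q}\sum_{\substack{a=1\\(a,q)=1}}^q G(a;q) e(ak/q), \]
which a Gauss-sum computation, together with the identity $n_p = \sum_{i=0}^{\gcd(p-1,3)-1}\chi^i(-k)$ recorded in the introduction, identifies with the truncation of the Euler product for $\sings(k)$.

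The main obstacle is the major arc stage: uniformly over squarefree $k \leq y$, the three error sources (the $O(q^{1/2+\varepsilon})$ Weyl remainder, the Siegel--Walfisz error, and the tail $\sings(k) - \sings_P(k)$) must, after squaring and summing in $k$, contribute $O(yx^2(\log x)^{-B})$. This forces a delicate balance in the choice of $C$, since enlarging $C$ shrinks the minor arc error but enlarges the major arc one. The restriction $\mu^2(k)=1$ is used here to simplify the local factors of $\sings(k)$, yielding a clean $1/\phi(q)^2$-type bound on the Fourier coefficients of the singular series so that the tail $\sings(k) - \sings_P(k)$ can be made arbitrarily small in powers of $\log x$ uniformly in $k$, and so that replacing $\sings_P(k)$ by $\sings(k)$ in the main term is harmless on average over $k$.
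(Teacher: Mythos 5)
Your circle-method skeleton (Bessel/Parseval over $k$, Weyl--Vinogradov plus Dirichlet approximation on the minor arcs, classical major-arc approximations) matches the paper's overall strategy, and those parts are fine. The genuine gap is in your last paragraph, at exactly the point that is the technical heart of the problem: the tail of the singular series. After the major-arc analysis the main term is $\sings_P(k)\,x$ with $P=(\log x)^C$, and the theorem requires $\sings(k)\,x$; the difference is $x\Psi(k)$ with
\begin{equation*}
\Psi(k)=\sum_{q>P}\frac{\mu(q)}{\varphi(q)}\prod_{p\mid q}\bigl(n_{k,p}-1\bigr),
\end{equation*}
where for squarefree $q$ composed of primes $p\equiv 1\pmod 3$ one has $n_{k,p}-1=\chi_{1,p}(-k)+\chi_{2,p}(-k)$ with cubic characters. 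Your claim that this tail is ``arbitrarily small in powers of $\log x$ uniformly in $k$'' via a ``$1/\phi(q)^2$-type bound'' is false: the coefficients have absolute value as large as $2^{\omega(q)}/\varphi(q)$ (whether or not $k$ is squarefree), so $\sum_{q>P}2^{\omega(q)}/\varphi(q)$ diverges and the series converges only conditionally; its value is tied to the behaviour of the cubic Hecke $L$-function $L\bigl(s,(\tfrac{k}{\cdot})_3\bigr)$ near $s=1$, whose conductor is of size about $k\le y\approx x^3$, far beyond any power of $\log x$, so no uniform pointwise saving is available (one would face Siegel-type issues, among others). The squarefree restriction on $k$ is not what rescues this; in the paper it is needed so that the cubic character sum machinery applies.

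What is actually required is a mean-square bound $\sum_{k\le y,\ \mu^2(k)=1}|\Psi(k)|^2\ll y(\log x)^{-B}$, i.e.\ cancellation on average over $k$, and this is where the paper spends most of its effort (its Section 5): the range $P<q\le U$ is handled by P\'olya--Vinogradov, the range $U<q\le 2^vU$ by the Baier--Young mean-value theorem for cubic character sums combined with the duality principle and by Huxley's large sieve over $\mathbb{Q}(\omega)$ after an application of cubic reciprocity, and the far tail $q>2^vU$ pointwise by Perron's formula together with the zero-free region for Hecke $L$-functions. None of this (nor any substitute for it) appears in your proposal, so as written the argument does not prove the theorem. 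A secondary, much smaller remark: the claimed ``delicate balance'' in $C$ is not the issue---the real tension is between the truncation point $P$ of the singular series and the size of the conductors $k\le y$, which is precisely why averaging over $k$ is indispensable.
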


From the theorem, we immediately deduce the following.

\begin{corollary}
Given $A$, $B$, $C>0$ and $\sings (k)$ as defined in the theorem, we have, for $x^3 (\log x)^{-A} \leq y \leq x^3$, that
\[ \sum_{n \leq x} \Lambda (n^3+k) = \sings(k) x + O \left( \frac{x}{(\log x)^B} \right) \]
holds for all square-free $k$ not exceeding $y$ with at most $O( y (\log x)^{-C})$ exceptions.
\end{corollary}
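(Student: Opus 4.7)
\medskip

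\noindent\textbf{Proof proposal for the Corollary.}

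The plan is to deduce the corollary from the theorem by a standard Chebyshev-style counting argument: a second-moment (variance) bound automatically controls the number of exceptional $k$ for which the pointwise asymptotic can fail by a prescribed amount. The idea is that if too many $k$ were exceptional, they would together contribute more to the mean square than the theorem permits.

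Concretely, fix the target constants $A, B, C > 0$ from the corollary and apply the theorem with the parameter $B$ replaced by $B' := 2B + C$ (this is allowed since $B'$ is itself arbitrary positive). Let
\[
  \mathcal{E} := \left\{ 1 \le k \le y : \mu^2(k) = 1 \text{ and } \left| \sum_{n \le x} \Lambda(n^3 + k) - \sings(k) x \right| > \frac{x}{(\log x)^B} \right\}
\]
be the set of square-free exceptional $k$. For every $k \in \mathcal{E}$ the squared discrepancy exceeds $x^2/(\log x)^{2B}$, so
\[
  |\mathcal{E}| \cdot \frac{x^2}{(\log x)^{2B}} \;\le\; \sum_{k \in \mathcal{E}} \left| \sum_{n \le x} \Lambda(n^3 + k) - \sings(k) x \right|^2 \;\le\; \sum_{\substack{1 \le k \le y \\ \mu^2(k)=1}} \left| \sum_{n \le x} \Lambda(n^3 + k) - \sings(k) x \right|^2.
\]

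By the theorem applied with exponent $B' = 2B + C$, the right-hand side is $O(yx^2/(\log x)^{2B+C})$. Dividing through by $x^2/(\log x)^{2B}$ yields $|\mathcal{E}| \ll y/(\log x)^C$, which is precisely the claimed exceptional set bound. There is no genuine obstacle here: the deduction is purely formal, the only point to notice being that one must invoke the theorem with a strictly larger exponent than $B$ (namely $2B+C$) in order to absorb both the pointwise loss $(\log x)^{-B}$ and the exceptional-set loss $(\log x)^{-C}$ after squaring. The hypotheses $x^3(\log x)^{-A} \le y \le x^3$ transfer verbatim from the theorem to the corollary, so no additional range issues arise.
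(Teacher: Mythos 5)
Your Chebyshev-style deduction is correct and is exactly the standard argument the paper has in mind when it says the corollary follows ``immediately'' from the theorem (the paper gives no separate proof). Invoking the theorem with exponent $2B+C$ and bounding the exceptional set by comparing $|\mathcal{E}|\,x^2(\log x)^{-2B}$ with the variance bound is precisely the intended route, so there is nothing to add.
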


We shall use the circle method to study this problem.  The readers are referred to \cite{RCV} or Chapter 20 of \cite{HIEK} for an introduction to the circle method.  The starting point for us is the identity
\begin{equation} \label{startpt}
 \sum_{n \leq x}\Lambda(n^3+k) = \int\limits_0^1 \sum_{m \leq z}\Lambda(m)e(\alpha m)\sum_{n\leq x}e(-\alpha (n^3+k)) \dif \alpha,
 \end{equation}
where henceforth $z=x^3+y$ and $e(w) = \exp(2 \pi i w)$. \newline

\section{Preliminary Lemmas}

In this section, we collect some preliminary lemmas which we shall need in the sequel.

\begin{lemma} [Gallagher] \label{gallalem}
Let $2 < \Delta < N/2$ and $N < N^{\prime} < 2N $. For arbitrary $a_n \in \comc$, we have
$$
\int\limits_{|\beta| < \Delta^{-1}} \left| \sum_{N<n<N^{\prime}}a_n e(\beta n)\right|^2 \dif \beta \ll \Delta^{-2}\int\limits_{N-\Delta/2}^{N^{\prime}}\left|\sum_{\max(t,N)<n<\min(t+\Delta/2,N^{\prime})}a_n\right|^2 \dif t
$$
where the implied constant is absolute.
\end{lemma}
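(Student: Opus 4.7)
The plan is to recognize the inner sum on the right-hand side as a convolution in disguise and then obtain the claimed bound by a single application of Plancherel's theorem on $\rear$.

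Concretely, I would extend $a_n := 0$ for $n \notin (N, N^{\prime})$, set $\phi := \mathbf{1}_{(-\Delta/2,\, 0)}$, and let $a := \sum_n a_n \delta_n$ be the associated discrete measure. With this convention the condition $\max(t, N) < n < \min(t + \Delta/2, N^{\prime})$ appearing in the inner sum collapses simply to $t < n < t + \Delta/2$, so the sum equals
\[ b(t) := (\phi \ast a)(t) = \sum_n a_n \phi(t - n), \]
and $b$ is supported in $(N - \Delta/2, N^{\prime})$. Consequently the right-hand side of the lemma is precisely $\Delta^{-2} \|b\|_{L^2(\rear)}^2$.

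Writing $S(\beta) = \sum_{N<n<N^{\prime}} a_n e(\beta n)$ for the exponential sum on the left, a short direct computation yields $|\widehat{a}(\beta)| = |S(-\beta)|$ and $|\widehat{\phi}(\beta)| = |\sin(\pi \beta \Delta/2)|/(\pi|\beta|)$. Plancherel's theorem together with the change of variable $\beta \mapsto -\beta$ gives
\[ \|b\|_{L^2(\rear)}^2 = \int_{\rear} |\widehat{\phi}(\beta)|^2 |S(\beta)|^2 \, \dif \beta. \]
Restricting the $\beta$-integral to $|\beta| < \Delta^{-1}$ and invoking Jordan's inequality $|\sin x| \geq (2/\pi)|x|$ valid on $|x| \leq \pi/2$ supplies the uniform lower bound $|\widehat{\phi}(\beta)|^2 \geq \Delta^2/\pi^2$ throughout this range, which rearranges to the claimed inequality with absolute implied constant $\pi^2$. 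There is no substantial analytic obstacle; the argument is essentially a one-shot application of Plancherel, with the only minor points of care being the matching of the support of $b$ to the $t$-range on the right-hand side and the derivation of the lower bound on $|\widehat{\phi}|$ via Jordan's inequality.
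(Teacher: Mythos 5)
Your argument is correct: the convolution $b=\phi\ast a$ with $\phi=\mathbf{1}_{(-\Delta/2,0)}$ reproduces the inner sum and its support exactly, Plancherel gives $\|b\|_{L^2}^2=\int_{\rear}|\widehat{\phi}(\beta)|^2|S(\beta)|^2\,\dif\beta$, and Jordan's inequality yields $|\widehat{\phi}(\beta)|\geq\Delta/\pi$ on $|\beta|<\Delta^{-1}$, giving the stated bound with absolute constant $\pi^2$. The paper itself only cites Lemma 1 of Gallagher, and your Plancherel argument is essentially that original proof, so there is nothing further to reconcile.
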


\begin{proof}This is Lemma 1 in \cite{Galla} in a slightly modified form. \end{proof}

We shall also need the following lemma.

\begin{lemma} [Wolke, Mikawa] \label{mikawalem}
Let
$$ \mathfrak{J}(q,\Delta) = \sum_{\chi \bmod q}\int\limits_N^{2N} \left|\sum_{t < n <t + q\Delta}^{\sharp}\chi(n)\Lambda(n) \right|^2 \dif t
$$
\end{lemma}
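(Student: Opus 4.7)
The plan is to establish a bound of the form
\[\mathfrak{J}(q,\Delta) \ll_A \frac{Nq\Delta^2}{(\log N)^A}\]
for $q$ and $\Delta$ in a suitable range (typically $q\Delta \geq N^{1/2+\varepsilon}$ with $q$ not too large), using the zero-density machinery for Dirichlet $L$-functions. First I would apply a truncated Perron formula together with the explicit formula to express each character sum
\[\sum_{t < n < t+q\Delta}^{\sharp}\chi(n)\Lambda(n)\]
as a main term (present only for the principal character) minus a sum over non-trivial zeros $\rho=\beta+i\gamma$ of $L(s,\chi^\ast)$ of the quantity $((t+q\Delta)^\rho - t^\rho)/\rho$, up to an acceptable truncation error. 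Here $\chi^\ast$ denotes the primitive character inducing $\chi$.

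Next I would bound the $t$-mean square of the zero sum by splitting the zeros according to the size of $|\gamma|$ and using the estimate
\[|(t+q\Delta)^\rho - t^\rho| \ll \min\bigl(q\Delta\cdot N^{\beta-1},\, N^\beta/|\gamma|\bigr)\]
inside a Gallagher-type integral inequality (Lemma~\ref{gallalem} used in reverse, or a direct integration). After Cauchy--Schwarz to decouple pairs of zeros, the contribution of zeros with $\Re\rho = \beta$ is controlled by an expression of the shape
\[(q\Delta)^2 N^{2\beta-1}\sum_{\chi\bmod q} N(\beta,T,\chi^\ast),\]
which I would then bound using a zero-density estimate of Huxley--Jutila type, namely $\sum_{\chi \bmod q} N(\sigma,T,\chi^\ast) \ll (qT)^{c(1-\sigma)}(\log qT)^B$ for some absolute constant $c$.

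The principal character contribution reduces to the short-interval prime-counting function $\psi(t+q\Delta;\chi_0)-\psi(t;\chi_0)$, which, after extracting the expected main term $q\Delta$, is bounded by the Siegel--Walfisz theorem, providing an ineffective log-power saving that also absorbs any putative Siegel zero. Summing the $\beta$-by-$\beta$ contributions and integrating against $d\sigma$ would then give the claimed bound.

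The main obstacle is ensuring that the zero-density input is strong enough near $\sigma = 1$. When $q\Delta$ is close to the lower end of the admissible range, the factor $N^{2\beta-1}$ is nearly $N$, so the $(qT)^{c(1-\sigma)}$ saving from the density bound must be balanced against an a priori log-saving from Siegel--Walfisz for the small-conductor characters. The delicate step is a character-splitting argument that handles small and large conductors separately, so that in each range the combined exponent beats $1$ with enough room to deliver the required power of $\log N$.
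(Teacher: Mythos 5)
The paper does not actually prove this lemma; it quotes it as Lemma 2 of Mikawa, and the proof in that reference is indeed the machinery you sketch: explicit formula for the twisted sums, a Gallagher-type mean-value over $t$, a Huxley-type zero-density estimate summed over the characters mod $q$, and a separate treatment of the exceptional zero. So your skeleton is the right one, and your target bound $Nq\Delta^2(\log N)^{-A}$ is equivalent to \eqref{3} since $q\le(\log N)^B$.

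There are, however, two concrete gaps. First, Siegel--Walfisz is the wrong tool for the principal-character term: with the $\sharp$ convention that term is essentially $\psi(t+q\Delta)-\psi(t)-q\Delta$ with $q\Delta$ as small as $N^{1/5+\varepsilon}$, a short-interval quantity about which Siegel--Walfisz (a statement about progressions to small moduli over long ranges) says nothing; it must be fed through the same mean-square-over-$t$ zero-sum argument, now applied to the zeros of $\zeta$. Similarly, your proposed ``small versus large conductor'' splitting has no content here, since every conductor involved is at most $q\le(\log N)^B$. What actually rescues the range $\sigma$ near $1$ --- where the factor $N^{2(\beta-1)}(qT)^{c(1-\beta)}$ gives no saving --- is a zero-free region of Vinogradov--Korobov strength for $\zeta$ and for the $L(s,\chi)$ of these tiny moduli: the classical region of width $c/\log(qT)$ only produces a bounded saving, whereas one needs the region to have width $\gg \log\log N/\log N$ to win an arbitrary power of $\log N$; Siegel's theorem is then invoked only to dispose of a possible exceptional real zero, which is exactly where the ineffectivity and the hypothesis $q\le(\log N)^B$ enter. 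Second, your hedged admissible range $q\Delta\ge N^{1/2+\varepsilon}$ falls well short of the lemma's $\Delta>N^{1/5+\varepsilon}$; with Huxley's exponent $12/5$ in the density input the computation you describe does reach interval lengths about $N^{1/6+\varepsilon}$, hence covers $1/5$, but as written your proposal does not commit to the inputs needed to deliver the stated range, so the lemma as used in the paper is not yet established by it.
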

\noindent
where the $\sharp$ over the summation symbol means that if $\chi=\chi_0$, then $\chi(n)\Lambda(n)$ is replaced by $\Lambda(n)-1$. Let $\varepsilon$, $A$ and $B>0$ be given. If $q \leq (\log N)^B$ and $ N^{1/5 + \varepsilon}<\Delta <N^{1-\varepsilon}$, then we have

\begin{equation}\label{3} \mathfrak{J}(q,\Delta) \ll (q\Delta)^2 N(\log N)^{-A}
\end{equation}
where the implied constant depends only on $\varepsilon$, $A$ and $B$.

\begin{proof}This is Lemma 2 in \cite{Mika}.\end{proof}

The following Bessel's lemma plays an important role in many places of the proof.

\begin{lemma} [Bessel] \label{bessineq}
Let $\phi_1,\phi_2,\dots,\phi_R$ be orthonormal members of an inner product space $V$ over $\comc$ and let $\xi \in V$. Then
\[ \sum_{r=1}^R \left| (\xi,\phi_r) \right|^2 \leq (\xi , \xi). \]
\end{lemma}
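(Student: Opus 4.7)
The plan is to carry out the standard projection argument. First, I would form the orthogonal projection of $\xi$ onto the subspace spanned by the orthonormal family, namely
\[ \eta = \sum_{r=1}^{R} (\xi, \phi_r)\, \phi_r, \]
and then consider the nonnegative scalar $(\xi - \eta,\, \xi - \eta) \geq 0$, which is guaranteed by the positive-definiteness axiom for the inner product on $V$. The role of $\eta$ is purely to encapsulate the coefficients $(\xi,\phi_r)$ in a single vector whose inner products against $\xi$ and against itself are easy to compute.

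Next, I would expand the nonnegativity statement above using sesquilinearity of the inner product to obtain
\[ (\xi, \xi) - (\xi, \eta) - (\eta, \xi) + (\eta, \eta) \geq 0. \]
A direct computation using the orthonormality condition $(\phi_r, \phi_s) = \delta_{rs}$ shows that each of the three quantities $(\xi, \eta)$, $(\eta, \xi)$, and $(\eta, \eta)$ collapses to the same real number
\[ \sum_{r=1}^{R} |(\xi, \phi_r)|^2. \]
Substituting these values into the displayed inequality and rearranging immediately yields the claim.

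There is no real obstacle here; this is a short, textbook argument. The only minor bookkeeping concern is tracking which slot of the inner product is conjugate-linear, but since the three relevant quantities all evaluate to the same real sum of squared moduli, the specific convention has no effect on the computation. I would therefore expect the authors' proof to be essentially a one-line reference or a two- or three-line expansion of the sort sketched above.
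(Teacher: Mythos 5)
Your argument is correct: the projection computation indeed gives $(\xi,\eta)=(\eta,\xi)=(\eta,\eta)=\sum_{r=1}^R|(\xi,\phi_r)|^2$, so $(\xi-\eta,\xi-\eta)\geq 0$ yields the inequality. The paper does not prove the lemma at all --- it simply cites a standard reference --- and your sketch is exactly the textbook projection argument that such a reference contains, so there is nothing to compare beyond noting that you supplied the details the authors chose to omit.
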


\begin{proof} This is a standard result. See \cite{PRH} for a proof. \end{proof}

The following lemma is needed in the estimates involving the minor arcs.

\begin{lemma} [Weyl Shift] \label{weylshift}
If $f(x) = \alpha x^k + \dots$ is a polynomial with real coefficents and $k \geq 1$, then
\[ \left| \sum_{1 \leq n \leq N}e(f(n)) \right| \leq 2N\left\{N^{-k}\sum_{-N<l_1,l_2,\dots,l_{k-1}<N}\min \left( N,\frac{1}{\left\| \alpha k!\prod_{i=1}^{k-1}l_i \right\|} \right)\right\}^{2^{1-k}}, \]
where $\| x \|$ denotes the distance of $x \in \rear$ to the closest integer.
\end{lemma}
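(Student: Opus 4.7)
The plan is to prove this classical Weyl inequality by iterated differencing combined with Cauchy--Schwarz, which is the standard approach. Write $S(f)=\sum_{1\le n\le N}e(f(n))$ and define the difference operator $\Delta_l f(n):=f(n+l)-f(n)$. The key observation is that if $f$ has degree $k$ with leading coefficient $\alpha$, then $\Delta_l f$ is a polynomial in $n$ of degree $k-1$ with leading coefficient $k\alpha l$; iterating, $\Delta_{l_{k-1}}\cdots\Delta_{l_1} f$ is a linear polynomial in $n$ with leading coefficient $\alpha\, k!\, l_1\cdots l_{k-1}$.

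First, I would square and rewrite
\[
|S(f)|^{2}=\sum_{m,n}e\bigl(f(m)-f(n)\bigr)=\sum_{|l_{1}|<N}\ \sum_{n\in I(l_{1})}e\bigl(\Delta_{l_{1}}f(n)\bigr),
\]
where $I(l_1)$ is the sub-interval of $[1,N]$ on which both $n$ and $n+l_1$ lie. This gives
\[
|S(f)|^{2}\le\sum_{|l_{1}|<N}\Bigl|\sum_{n\in I(l_{1})}e\bigl(\Delta_{l_{1}}f(n)\bigr)\Bigr|.
\]
Now I would apply Cauchy--Schwarz to bring this sum inside a square, at the cost of a factor of $2N$ (the length of the $l_1$-range), and then repeat the differencing on the inner sum. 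Each round of the procedure squares the left-hand side, introduces a new variable $l_j$ ranging over $(-N,N)$, multiplies by a factor of $2N$, and reduces the degree of the inner polynomial by one.

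After carrying out this process $k-1$ times, the inner sum becomes $\sum_{n\in I(l_1,\ldots,l_{k-1})} e\bigl(\alpha k!\,l_{1}\cdots l_{k-1}\,n+\text{const}\bigr)$, which is a geometric progression, hence bounded by $\min\bigl(N,\,1/\|\alpha k!\, l_1\cdots l_{k-1}\|\bigr)$. Collecting the accumulated factors of $2N$ gives
\[
|S(f)|^{2^{k-1}}\le (2N)^{2^{k-1}-1}\!\!\sum_{-N<l_1,\ldots,l_{k-1}<N}\min\!\Bigl(N,\frac{1}{\|\alpha k!\prod_{i}l_i\|}\Bigr),
\]
and taking the $2^{k-1}$-th root yields the stated bound with the factor $2N$ and exponent $2^{1-k}$.

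The only delicate points I anticipate are bookkeeping issues: tracking the accumulation of the constants $2N$ across the iterations, making sure the index set for each new variable $l_j$ is indeed $(-N,N)$ after accounting for the truncation of $n$ to $I(l_1,\ldots,l_{j})$, and verifying that the lower-order terms of $f$ (and of the intermediate polynomials $\Delta_{l_j}\cdots\Delta_{l_1}f$) never contribute to the final estimate because the modulus-one geometric sum depends only on the leading coefficient modulo $1$. None of these are substantial obstacles; they are the standard technicalities of Weyl's method, and the lemma as stated accommodates them with the slightly crude factor $2N$ in front.
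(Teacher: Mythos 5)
The paper offers no proof of this lemma at all --- it simply cites Proposition 8.2 of Iwaniec--Kowalski --- and your differencing-plus-Cauchy--Schwarz scheme is exactly the standard argument behind that citation, so the approach is the right one. The problem is the bookkeeping, which you yourself identified as the only delicate point: as written it is wrong, and your final inequality does not imply the lemma. When you square $\sum_{|l_1|,\dots,|l_j|<N}\bigl|\sum_{n}e\bigl(\Delta_{l_j}\cdots\Delta_{l_1}f(n)\bigr)\bigr|$, Cauchy--Schwarz has to be taken over the full index set of all $j$ variables already introduced (the inner sums depend on all of them), so the $(j+1)$-st round costs a factor $(2N)^{j}$, not $2N$. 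The correct induction (with $a_{j+1}=2a_j+j$, $a_1=0$, hence $a_j=2^{j}-j-1$) reads
\[
|S|^{2^{j}} \le (2N)^{2^{j}-j-1}\sum_{|l_1|<N}\cdots\sum_{|l_j|<N}\Bigl|\sum_{n\in I(l_1,\dots,l_j)}e\bigl(\Delta_{l_j}\cdots\Delta_{l_1}f(n)\bigr)\Bigr|,
\]
and at $j=k-1$, after the geometric-series bound on the linear inner sum, it gives $|S|^{2^{k-1}}\le (2N)^{2^{k-1}-k}\sum_{l_1,\dots,l_{k-1}}\min\bigl(N,\|\alpha k!\,l_1\cdots l_{k-1}\|^{-1}\bigr)$. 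Since $(2N)^{2^{k-1}-k}\le (2N)^{2^{k-1}}N^{-k}$, this is precisely the stated bound.

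By contrast, your displayed conclusion with the factor $(2N)^{2^{k-1}-1}$ is strictly weaker than the lemma for $k\ge 2$: raising the lemma to the $2^{k-1}$-th power gives the target $|S|^{2^{k-1}}\le 2^{2^{k-1}}N^{2^{k-1}-k}\sum\min$, and $(2N)^{2^{k-1}-1}$ exceeds this normalization by a factor of $N^{k-1}/2$, so ``taking the $2^{k-1}$-th root'' of your inequality does not yield the stated estimate. The gap is therefore not in the idea but in the exponent of $2N$; with the recursion above the same argument closes. Two minor points your sketch should also absorb: terms with some $l_i=0$ are covered by $\min(N,\cdot)$ (interpreting $1/\|0\|$ as $\infty$, since the differenced sum is then trivially at most $N$), and the degenerate case $k=1$ is just the geometric series bound, consistent with the empty product convention.
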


\begin{proof} This is Proposition 8.2 of \cite{HIEK}. \end{proof}

To dispose of the contributions from certain parts of the singular series, $\sings (k)$, we shall need the following lemmas.

\begin{lemma} [P\'olya-Vinogradov] \label{polyvino}
For any non-principle character $\chi$ modulo $q$, we have
\[ \left|\sum_{M<n\leq M+N}\chi(n)\right| \leq 6\sqrt{q}\log q. \]
\end{lemma}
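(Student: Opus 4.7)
The plan is to prove the Pólya-Vinogradov inequality via the classical Fourier-analytic argument using Gauss sums. First I would reduce to the case when $\chi$ is primitive. If $\chi$ modulo $q$ is induced by the primitive character $\chi^\ast$ modulo $q^\ast$ with $q^\ast \mid q$ and $q^\ast > 1$, one detects the coprimality condition $\gcd(n,q/q^\ast) = 1$ by Möbius inversion and writes
\[ \sum_{M < n \leq M+N}\chi(n) = \sum_{d \mid q/q^\ast} \mu(d)\chi^\ast(d) \sum_{M/d < m \leq (M+N)/d}\chi^\ast(m), \]
so that it suffices to handle the primitive case, with only a mild loss absorbed into the constant.

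Assuming $\chi$ is primitive modulo $q$, I would next invoke the Fourier expansion of $\chi$ furnished by its Gauss sum. Recall $\tau(\bar\chi) = \sum_{a=1}^{q} \bar\chi(a) e(a/q)$ satisfies $|\tau(\bar\chi)| = \sqrt{q}$, and for primitive $\chi$ one has
\[ \chi(n) = \frac{1}{\tau(\bar\chi)}\sum_{a=1}^{q-1}\bar\chi(a)\, e(an/q). \]
Substituting this identity and interchanging the order of summation yields
\[ \sum_{M < n \leq M+N}\chi(n) = \frac{1}{\tau(\bar\chi)}\sum_{a=1}^{q-1}\bar\chi(a)\sum_{M < n \leq M+N}e(an/q). \]

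The next step is to bound the inner geometric sum by $\min\!\bigl(N,\, 1/(2\|a/q\|)\bigr)$, where $\|\cdot\|$ is the distance to the nearest integer. Taking absolute values and using $|\bar\chi(a)| \leq 1$ and $|\tau(\bar\chi)|=\sqrt{q}$, I obtain
\[ \left|\sum_{M < n \leq M+N}\chi(n)\right| \leq \frac{1}{\sqrt{q}}\sum_{a=1}^{q-1}\min\!\left(N,\frac{1}{2\|a/q\|}\right). \]
Finally I evaluate the right-hand side by splitting at $a = q/2$ and using the symmetry $\|a/q\| = \|(q-a)/q\|$, which reduces the sum to a harmonic one bounded by $q\log q + O(q)$. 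This gives a bound of order $\sqrt{q}\log q$ for the character sum.

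The only real bookkeeping issue (not a genuine obstacle) is tracking the constants carefully through the primitive-to-imprimitive reduction and the harmonic sum estimate so as to arrive at the stated explicit constant $6$. No new ideas beyond standard Gauss-sum manipulations and the harmonic inequality $\sum_{a\leq q/2}1/a \leq \log q + 1$ are needed.
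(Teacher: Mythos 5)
Your proposal is correct in outline: the paper gives no proof of its own here, citing Theorem 12.5 of Iwaniec--Kowalski, and the Gauss-sum completion argument you sketch (reduction to the primitive case via M\"obius over $d \mid q/q^\ast$, the expansion $\chi(n)=\tau(\bar\chi)^{-1}\sum_a \bar\chi(a)e(an/q)$ with $|\tau(\bar\chi)|=\sqrt{q}$, and the bound $\min(N,1/(2\|a/q\|))$ for the completed geometric sums) is exactly the standard route taken in that reference. The only remaining work is the constant bookkeeping you already flag, in particular checking that the divisor factor $2^{\omega(q/q^\ast)}$ in the imprimitive reduction is absorbed by $\sqrt{q/q^\ast}$ up to a small constant so that the explicit bound $6\sqrt{q}\log q$ survives.
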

\begin{proof} This is Theorem 12.5 in \cite{HIEK}. \end{proof}

\begin{lemma} [Baier-Young] \label{B&Y}
For $(a_m)_{m \in \intz}$ with $a_m \in \comc$ arbitrary, we have
$$ \sum_{Q<q<2Q} \ \sideset{}{^{\star}}\sum_{\substack{\chi \bmod q\\ \chi^3 = \chi_0}}\left| \ \sideset{}{^{*}}\sum_{M<m<2M} a_m\chi(m)\right|^2 \ll (QM)^{\varepsilon} \left( Q^{11/9} + Q^{2/3}M \right) \sideset{}{^{*}} \sum_{\substack{M<m<2M\\(m,3)=1}} |a_m|^2,
$$
where the $*$ over the sum over $m$ indicates that the sum is over squarefree integers, the $\star$ over the sum over $\chi$ indicates that $\chi$ is primitive.
\end{lemma}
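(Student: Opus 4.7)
The approach is to reduce the problem to Heath-Brown's cubic large sieve on the Eisenstein integers $\mathbb{Z}[\omega]$ (with $\omega = e^{2\pi i /3}$), and then to refine the resulting bound by exploiting the fact that the summation variable $m$ is a rational integer. The starting point is the standard parametrization: for squarefree $q$ coprime to $3$, each primitive Dirichlet character $\chi\bmod q$ of order exactly $3$ agrees, on integers coprime to $q$, with a cubic residue symbol $(\cdot/c)_3$ for some primary squarefree Eisenstein integer $c$ of norm $N(c)=q$. The number of such $c$ above a given $q$ is bounded by $2^{\omega(q)}\ll q^\varepsilon$. Hence, up to a factor of $(QM)^\varepsilon$, the left-hand side of the lemma is majorized by
$$\sum_{\substack{c \in \mathbb{Z}[\omega]\text{ primary, squarefree}\\ Q < N(c) < 2Q}}\left|\,\sideset{}{^{*}}\sum_{M<m<2M} a_m\left(\frac{m}{c}\right)_3\right|^2.$$

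Next I would bound this quantity. A direct application of Heath-Brown's cubic large sieve, after embedding rational $m$ as Eisenstein integers of norm $m^2$, gives an estimate of order $(QM)^\varepsilon(Q + M^2 + Q^{2/3} M^{4/3})\sum^{*}|a_m|^2$, which is too weak in the mixed term. To sharpen it, I would expand the square and use the multiplicativity of the cubic symbol to rewrite the resulting cross terms as sums of $\left(m_1 m_2^2/c\right)_3$, and then apply cubic reciprocity to flip these to $\left(c/m_1m_2^2\right)_3$ modulo unit and $3$-adic correction factors. This permits a second application of the large sieve in the flipped variable, where the effective denominator has rational size at most $(2M)^3$; after a dyadic decomposition on the value of $n:=m_1m_2^2$ and an appeal to a rational version of the cubic large sieve, this produces the sharper mixed term $Q^{2/3}M$. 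Meanwhile, the $Q^{11/9}$ term arises from the diagonal contribution (pairs $(m_1,m_2)$ with $m_1 m_2^2$ a perfect cube), balanced via a Cauchy--Schwarz interpolation against the Heath-Brown estimate.

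The main obstacle is the cubic-reciprocity step: the correction factors arising from units in $\mathbb{Z}[\omega]$ and from the ramified prime above $3$ have to be tracked with care so as not to break the orthogonality needed to reapply the large sieve in the flipped variable. A secondary technical point is that the ``near-diagonal'' contributions, coming from $(m_1,m_2)$ with $m_1 m_2^2$ a perfect cube but $m_1\ne m_2$, must be shown to contribute only $O(Q^{11/9+\varepsilon}\sum^{*}|a_m|^2)$; this requires a divisor-function type bound on the number of such representations. Once these pieces are in place, a dyadic optimization of the parameters yields the claimed estimate.
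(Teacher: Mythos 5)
First, a point of orientation: the paper does not prove this lemma at all; its ``proof'' is a citation, since the statement is exactly one of the bounds in Theorem 1.4 of Baier and Young \cite{BaYo}. So the relevant comparison is with the argument in that source, and your sketch does share its broad strategy: pass from primitive cubic Dirichlet characters to cubic residue symbols attached to primary squarefree elements of $\mathbb{Z}[\omega]$, use Heath-Brown's cubic large sieve, and exploit cubic reciprocity and the fact that the $m$-variable is rational.

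As a proof, however, the proposal has genuine gaps at precisely the two places where the real work lies. (a) After the reciprocity flip, the tool you can legitimately invoke is Heath-Brown's cubic large sieve with the rational squarefree $m$ serving as moduli of norm $m^2$, and, as you note yourself, this only yields the mixed term $Q^{2/3}M^{4/3}$. The ``rational version of the cubic large sieve'' that would instead give $Q^{2/3}M$ is not an available black box: it is essentially a restatement of the inequality being proved, since the entire content of Baier--Young's improvement is that rational moduli permit a saving over the generic Heath-Brown bound, and they obtain this not by a formal flip-and-sieve argument but by Poisson summation in the $m$-variable, which introduces cubic Gauss sums, combined with cancellation in averages of cubic Gauss sums of Heath-Brown--Patterson type. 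Citing such a sharpened sieve as an input is circular. (b) The attribution of the term $Q^{11/9}$ to the diagonal is not correct. For squarefree $m_1,m_2$ coprime to the modulus, $m_1m_2^2$ is a perfect cube only when $m_1=m_2$ (compare $p$-adic valuations), so your ``near-diagonal'' set is empty and the true diagonal contributes only about $Q^{1+\varepsilon}\sum|a_m|^2$, there being $\ll Q^{1+\varepsilon}$ pairs $(q,\chi)$; the exponent $11/9$ in Baier--Young emerges from the off-diagonal/Gauss-sum analysis and the ensuing optimization, not from counting cube pairs. As written, then, both decisive terms of the claimed bound are asserted rather than derived, and the argument cannot close without importing the substance of the Baier--Young proof -- which, for the purposes of this paper, is exactly what the citation does.
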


\begin{proof} This is one of the bounds for the mean-values of cubic character sums in Theorem 1.4 in \cite{BaYo}. \end{proof}

\begin{lemma} [Huxley] \label{huxleylemma}
Let $K$ be a number field and $\mathfrak{r}$ denote an ideal in $K$.  Suppose $u(\mathfrak{r})$ is a complex-valued function defined on the set of ideals in $K$.  We have
\[ \sum_{\mathcal{N}(\mathfrak{f})\leq Q}\frac{\mathcal{N}(\mathfrak{f})}{\Phi(\mathfrak{f})} \sideset{}{^{\star}}\sum_{\chi \bmod{\mathfrak{f}}} \left|  \sum_{\mathcal{N}(\mathfrak{r})\leq z}u(\mathfrak{r})\chi(\mathfrak{r}) \right|^2 \ll (z+Q^2)\sum_{\mathcal{N}(\mathfrak{r}) \leq z}|u(\mathfrak{r})|^2, \]
where $\mathcal{N}(\mathfrak{f})$ denotes the norm of the ideal $\mathfrak{f}$, $\Phi(\mathfrak{f})$ is Euler's totient function generalized to the setting of number fields, the $\star$ over the sum over $\chi$ indicates that $\chi$ is a primitive character of the narrow ideal class group modulo $\mathfrak{f}$ and the implicit constant depends on $K$.
\end{lemma}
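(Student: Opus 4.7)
The plan is to adapt the classical proof of the large sieve inequality (in the style of Selberg or Montgomery--Vaughan) to the setting of Hecke characters on a fixed number field $K$. By the duality principle it is equivalent to establish, for arbitrary $c(\mathfrak{f},\chi)\in\comc$, the bound
\[ \sum_{\mathcal{N}(\mathfrak{r})\leq z}\left|\sum_{\mathcal{N}(\mathfrak{f})\leq Q}\left(\frac{\mathcal{N}(\mathfrak{f})}{\Phi(\mathfrak{f})}\right)^{1/2}\!\sideset{}{^{\star}}\sum_{\chi\bmod\mathfrak{f}} c(\mathfrak{f},\chi)\chi(\mathfrak{r})\right|^2 \ll (z+Q^2)\sum_{\mathfrak{f},\chi}|c(\mathfrak{f},\chi)|^2, \]
which is more amenable to direct attack.

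The next step is to expand the square and exchange the order of summation, producing pairs $\chi_1$ modulo $\mathfrak{f}_1$, $\chi_2$ modulo $\mathfrak{f}_2$ together with an inner sum $\sum_{\mathcal{N}(\mathfrak{r})\leq z}\chi_1(\mathfrak{r})\overline{\chi_2(\mathfrak{r})}$. One then re-expresses the restriction to primitive characters by a M\"{o}bius-type inversion on the poset of integral ideals dividing $\mathfrak{f}_1\mathfrak{f}_2$; the weight $\mathcal{N}(\mathfrak{f})/\Phi(\mathfrak{f})$ is precisely what is required to absorb the combinatorics. Orthogonality of Hecke characters modulo $[\mathfrak{f}_1,\mathfrak{f}_2]$ then collapses the inner sum to a count of integral ideals $\mathfrak{r}$ of norm at most $z$ lying in a fixed narrow ray class modulo $[\mathfrak{f}_1,\mathfrak{f}_2]$ and coprime to it.

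The heart of the matter is bounding this residue-class count. A standard geometry-of-numbers argument applied to the Minkowski embedding of $K$ yields an estimate of the form $C_K\cdot z/\mathcal{N}([\mathfrak{f}_1,\mathfrak{f}_2])+O(z^{1-1/n})$, with $n=[K:\ratq]$; the main term arises from the $\mathcal{N}([\mathfrak{f}_1,\mathfrak{f}_2])$-fold subdivision of the lattice of ideals into residue classes, and the error from a Lipschitz-boundary count in the Minkowski fundamental domain. Reinserting this estimate and separating the contribution into a \emph{diagonal} piece $\mathfrak{f}_1=\mathfrak{f}_2$, $\chi_1=\chi_2$ (which furnishes the $z$ term) and an \emph{off-diagonal} remainder, handled by Cauchy--Schwarz together with the Weber--Landau bound $\#\{\mathfrak{f}:\mathcal{N}(\mathfrak{f})\leq Q\}\ll_K Q$ (which furnishes the $Q^2$ term), recovers the dual inequality and hence the stated bound.

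The principal obstacle I expect is the ideal-counting step in the presence of a non-trivial unit group: the Minkowski fundamental domain is not a convex body, so the classical Lipschitz estimate for lattice points in convex sets does not apply out of the box. This is circumvented by working within a single narrow ray class at a time, where the relevant region is a well-defined Lipschitz-parameterized fundamental domain; since $K$ is fixed and the implicit constant may depend on $K$, all factors involving the regulator, class number, and roots of unity are absorbed into that constant, so the error of the resulting count is $O_K(z^{1-1/n})$ and is harmless relative to the main term.
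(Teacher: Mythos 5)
First, note that the paper does not prove this lemma at all: it is quoted verbatim as Theorem 1 of Huxley's paper \cite{Hux}, so no argument of the kind you sketch is attempted there. Measured as a proof in its own right, your sketch has a genuine gap at its core. After dualizing and expanding the square, you are left, for each fixed pair $(\mathfrak{f}_1,\chi_1)$, $(\mathfrak{f}_2,\chi_2)$ with arbitrary coefficients $c(\mathfrak{f}_i,\chi_i)$, with the single character sum $\sum_{\mathcal{N}(\mathfrak{r})\leq z}\chi_1\overline{\chi_2}(\mathfrak{r})$. Orthogonality is of no use here: there is no average over a complete set of characters to a fixed modulus, so this sum does not ``collapse'' to a count of ideals in one narrow ray class. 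What you can do is bound it, for $\chi_1\neq\chi_2$, by splitting into ray classes modulo $[\mathfrak{f}_1,\mathfrak{f}_2]$ and using a geometry-of-numbers count, but that only yields a P\'olya--Vinogradov-type estimate, and this is where the plan fails quantitatively: there are about $Q^2$ admissible pairs $(\mathfrak{f},\chi)$, so estimating the off-diagonal terms one at a time with absolute values and Cauchy--Schwarz loses at least a factor of order $Q$ (one gets a bound of shape $z+Q^3$ or worse, with the lattice-count errors $O_K(z^{1-1/n})$ multiplied by the number of ray classes compounding the loss). The sharp constant $z+Q^2$ cannot be reached by term-by-term bounds on character sums even in the classical case over $\ratq$; it requires exploiting cancellation among the moduli, i.e.\ the well-spacedness of the associated additive frequencies.

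This is exactly what Huxley's actual proof does: primitive ray-class characters are converted to additive characters via Gauss sums (this is the step that produces the weight $\mathcal{N}(\mathfrak{f})/\Phi(\mathfrak{f})$, not a M\"obius inversion over divisors), and the problem is reduced to an additive large sieve over lattice points in the Minkowski embedding of $K$, where the Farey-type spacing of the frequencies gives the $z+Q^2$ bound. Your closing paragraph correctly anticipates the unit-group difficulty in the ideal count, but that is a secondary issue; the primary missing idea is the passage to the additive large sieve (or an equivalent mechanism such as a generalized Hilbert inequality for well-spaced points), without which the off-diagonal contribution cannot be controlled at the claimed strength. If you want a self-contained proof, you should restructure the argument along those lines rather than through ray-class counts; otherwise, simply citing Huxley, as the paper does, is the appropriate course.
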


\begin{proof} This is Theorem 1 of \cite{Hux}. \end{proof}

\begin{lemma}[Duality Principle]\label{dual}
Let $T=[t_{mn}]$ be a finite square matrix with entries from the complex numbers.  The following two statements are equivalent:
\begin{enumerate}
\item For any complex numbers $\{ a_n \}$, we have
\begin{equation*}
\sum_m \left| \sum_n a_n t_{mn} \right|^2 \leq D \sum_n |a_n|^2.
\end{equation*}
\item For any complex numbers $\{ b_n \}$, we have
\begin{equation*}
\sum_n \left| \sum_m b_m t_{mn} \right|^2 \leq D \sum_m |b_m|^2.
\end{equation*}
\end{enumerate}
\end{lemma}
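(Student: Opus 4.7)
The plan is to recognize that the Duality Principle expresses the classical fact that a finite matrix and its conjugate transpose induce operators of the same $\ell^2$ operator norm. Because of the symmetric form of the two statements (and because the matrix is square), I would prove only $(1)\Rightarrow(2)$; the converse then follows verbatim by applying the same argument to the transposed matrix $[t_{nm}]$, which interchanges the roles of the two indices.

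For $(1)\Rightarrow(2)$, my plan is to bound the sum
\[ S = \sum_n \left|\sum_m b_m t_{mn}\right|^2 \]
by setting $c_n = \sum_m b_m t_{mn}$, expanding $|c_n|^2 = \overline{c_n}\,c_n$, and interchanging the order of summation to rewrite
\[ S = \sum_m b_m \sum_n \overline{c_n}\, t_{mn}. \]
Applying Cauchy--Schwarz in the $m$-variable then produces a factor of $\sum_m |b_m|^2$ together with an inner sum
\[ \sum_m \left| \sum_n \overline{c_n}\, t_{mn} \right|^2, \]
which has exactly the form appearing in $(1)$, with the sequence $\{a_n\}$ replaced by $\{\overline{c_n}\}$. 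Invoking hypothesis $(1)$ with this choice bounds the inner sum by $D\sum_n |c_n|^2 = DS$, yielding $S^2 \leq DS\sum_m |b_m|^2$, from which $(2)$ follows upon dividing by $S$ (the case $S=0$ being trivial).

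This is a standard manipulation and presents no genuine obstacle. The only point that requires a moment of care is the self-referential nature of the Cauchy--Schwarz step: the test sequence against which $(1)$ is invoked depends on the very quantity $S$ being estimated, so the resulting Cauchy--Schwarz inequality takes the form of a quadratic inequality in $\sqrt{S}$, and one must observe that cancelling a single factor of $S$ delivers the sought bound with the same constant $D$.
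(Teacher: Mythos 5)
Your proof is correct: the self-referential Cauchy--Schwarz step (testing hypothesis (1) against $a_n=\overline{c_n}$ and cancelling one factor of $S$) is exactly the classical duality argument, and the symmetry remark handles the converse. The paper itself offers no proof, only a citation to Theorem 228 of Hardy--Littlewood--P\'olya, and your argument is precisely the standard proof of that cited result, so there is nothing to reconcile.
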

\begin{proof} This is a standard result.  See Theorem 228 in \cite{GHHJELGP}. \end{proof}

\begin{lemma} \label{zerofree}
Let $K/\ratq$ be a number field, $\xi$ a Hecke Grossencharakter modulo $(\mathfrak{m}, \Omega)$ where $\mathfrak{m}$ is a non-zero integral ideal in $K$ and $\Omega$ is a set of real infinite places where $\xi$ is ramified. Let the conductor $\Delta = |d_{K}|N_{K/\ratq}\mathfrak{m}$. There exists an absolute effective constant $c'>0$ such that the L-function $L(\xi,s)$ of degree $d = [K:\ratq]$ has at most a simple real zero in the region
\[ \sigma > 1 - \frac{c'}{d\log \Delta(|t|+3)}. \]
The exceptional zero can occur only for a real character and it is $< 1$.
\end{lemma}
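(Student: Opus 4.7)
The plan is to adapt the classical de la Vallée Poussin zero-free region argument from Dirichlet $L$-functions to Hecke $L$-functions. The crucial input is the non-negative trigonometric identity
\[ 3 + 4\cos\theta + \cos(2\theta) = 2(1+\cos\theta)^2 \geq 0, \]
which, applied to $\log$ of $\zeta_K(s)^3 L(s,\xi)^4 L(s,\xi^2)$ expanded via its Euler product for $\Re s > 1$, yields
\[ -\Re\left( 3 \frac{\zeta_K'}{\zeta_K}(\sigma) + 4\frac{L'}{L}(\sigma+it,\xi) + \frac{L'}{L}(\sigma+2it,\xi^2) \right) \geq 0 \]
for all real $t$ and $\sigma > 1$. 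The preliminary step is to record the basic analytic properties of $L(s,\xi)$: the Euler product, the meromorphic continuation and functional equation via the completed $L$-function, and the fact that $L(s,\xi)$ is entire unless $\xi$ is principal, in which case there is a simple pole at $s=1$.

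Next, I would invoke the Hadamard factorization of the completed Hecke $L$-function together with standard order estimates to expand $L'/L(s,\xi)$ as a sum over non-trivial zeros $\rho$ plus contributions from the archimedean $\Gamma$-factors. A density estimate derived from the functional equation and a Jensen-type argument shows that the number of non-trivial zeros in a unit box at height $t$ is $O(d\log\Delta(|t|+3))$, with the factor of $d$ coming from the $d$ archimedean Gamma factors in the completed $L$-function.

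Suppose $\rho = \beta + i\gamma$ is a zero of $L(s,\xi)$ with $\beta$ very close to $1$. Isolating the contribution of $\rho$ (and its pair $\bar\rho$, a zero of $L(s,\bar\xi)$) in the Hadamard expansion at $s = \sigma + i\gamma$, and using the pole of $\zeta_K$ at $s=1$ to control the other terms, one obtains an inequality of the form
\[ \frac{3}{\sigma - 1} - \frac{4}{\sigma - \beta} + O\left( d \log \Delta(|\gamma|+3) \right) \geq 0. \]
Choosing $\sigma = 1 + \lambda/(d \log \Delta(|\gamma|+3))$ for a suitably small absolute constant $\lambda > 0$ forces $\beta \leq 1 - c'/(d\log \Delta(|\gamma|+3))$, unless the asymmetry in the argument breaks down, which happens precisely when $L(s,\xi^2)$ also has a pole at $s=1$; this occurs iff $\xi^2 = \xi_0$, i.e.\ $\xi$ is real, and simultaneously $\gamma = 0$.

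The remaining case of a possible real zero of $L(s,\xi)$ for a real Grossencharakter $\xi$ is the Siegel-type exceptional situation, and its uniqueness is handled by applying the same argument to $\zeta_K(s)L(s,\xi_1)L(s,\xi_2)L(s,\xi_1\xi_2)$ for two hypothetical exceptional characters, which yields a contradiction when both corresponding real zeros are too close to $1$. The main obstacle will be carrying out the bookkeeping for the completed $L$-function uniformly in the number-field data, especially bounding the archimedean Gamma-factor contribution to $L'/L$ so that the bound $d\log\Delta(|t|+3)$ emerges with the correct dependence on both the degree $d$ and the conductor $\Delta = |d_K|\mathcal{N}_{K/\ratq}\mathfrak{m}$, and verifying that the $\Omega$-ramification at real infinite places is absorbed into these archimedean factors without altering the final shape of the estimate.
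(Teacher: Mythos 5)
The paper offers no proof of this lemma at all: it is quoted verbatim as Theorem 5.35 of Iwaniec--Kowalski, so the only comparison to make is with the proof of that cited theorem, and your outline is exactly that classical de la Vall\'ee Poussin/Landau argument (Euler product positivity via $3+4\cos\theta+\cos 2\theta\ge 0$ applied to $\zeta_K(s)^3L(s,\xi)^4L(s,\xi^2)$, Hadamard factorization of the completed $L$-function, the zero-count $O(d\log\Delta(|t|+3))$ in unit boxes, and the choice $\sigma=1+\lambda/(d\log\Delta(|\gamma|+3))$), so as a sketch it is sound and faithful to the source; note that since $\xi$ here is ramified only at real places it has finite order, so the usual Grossencharakter subtlety of $\xi^2$ being a norm twist of the principal character does not arise. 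One small point to tighten: the product $\zeta_K(s)L(s,\xi_1)L(s,\xi_2)L(s,\xi_1\xi_2)$ you invoke is the Landau--Page argument for uniqueness of the exceptional character \emph{across} different real characters, whereas the lemma's claim ``at most a simple real zero'' for a single real $\xi$ is obtained instead from the nonnegativity of the Dirichlet coefficients of $\zeta_K(s)L(s,\xi)$ (equivalently of $-\zeta_K'/\zeta_K(\sigma)-L'/L(\sigma,\xi)$ up to the pole term) at real $\sigma>1$: two real zeros, or one double zero, near $1$ would contribute $2/(\sigma-\beta)$ against the single pole term $1/(\sigma-1)+O(d\log\Delta)$, which is the contradiction needed; with that substitution your plan matches the cited proof.
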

\begin{proof} This is Theorem 5.35 of \cite{HIEK}. \end{proof}

\begin{lemma} [Perron]\label{perron}
Suppose that $y \not= 1$ is a positive real number and let
$$\delta(y) = \begin{cases}
  1,  & y > 1, \\
  & \\
  0, & \mbox{otherwise.}
  \end{cases}
$$
Furthermore, let $c>0$ and $T>0$. Then
$$ \frac{1}{2 \pi i}\int_{c-iT}^{c+iT}\frac{y^s}{s}ds = \delta(y) + O\left(y^c \min\left(1, T^{-1}\left|\log y\right|^{-1}\right)\right).
$$
\end{lemma}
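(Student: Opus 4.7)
The approach is standard contour integration. The integrand $y^s/s$ is meromorphic on $\comc$ with a single simple pole at $s=0$, where it has residue $y^0 = 1$; the direction of the contour shift is dictated by the sign of $\log y$, which is precisely what distinguishes the two cases of $\delta(y)$.

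For the case $y > 1$, since $\log y > 0$ the factor $y^{\sigma} = e^{\sigma \log y}$ decays as $\sigma \to -\infty$. I would augment the given vertical segment by a rectangular path closing to the left, with additional vertices at $-U \pm iT$ for large $U > 0$. The pole at $s = 0$ is enclosed and contributes $1$ by the residue theorem. The left vertical side is bounded in modulus by $2Ty^{-U}/U \to 0$ as $U \to \infty$. Each horizontal side is bounded by
\[ \left| \int_{-U}^{c} \frac{y^{\sigma \pm iT}}{\sigma \pm iT}\, \dif\sigma \right| \leq \int_{-\infty}^{c} \frac{y^\sigma}{T}\, \dif\sigma = \frac{y^c}{T \log y}, \]
so letting $U \to \infty$ gives
\[ \frac{1}{2\pi i}\int_{c-iT}^{c+iT}\frac{y^s}{s}\,\dif s = 1 + O\!\left(\frac{y^c}{T\log y}\right). \]
The case $0 < y < 1$ is treated symmetrically: since now $\log y < 0$, one closes to the right by the rectangle with vertices $U \pm iT$, the far right side vanishes as $U \to \infty$, no pole is enclosed, and the horizontal sides contribute $O(y^c/(T|\log y|))$, matching the vanishing main term $\delta(y) = 0$.

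These two cases together yield the bound $O(y^c T^{-1}|\log y|^{-1})$, which is the effective one inside the $\min$ whenever $T|\log y| \geq 1$. In the complementary regime $T|\log y| < 1$ (where $y$ lies very close to $1$), the contour-shift estimate is weak, and the $\min$ reverts to the bound $O(y^c)$. This follows by splitting the integral into a central piece $|t| \leq 1$, which is trivially $O(y^c)$, and a tail on which integration by parts against $y^{c+it}$ produces boundary and remainder terms of the required order once $T|\log y| \leq 1$ is used. The main subtlety I expect is the bookkeeping in this second regime; the principal contour-integration step itself is routine once one notes that $y^\sigma$ decays in the direction of the shift and that the rectangular contour can be enlarged to enclose (or avoid) the pole at $s=0$ as dictated by $\delta(y)$.
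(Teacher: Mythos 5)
Your rectangle contour computation (shift left for $y>1$ picking up the residue $1$ at $s=0$, shift right for $0<y<1$ enclosing no pole) is correct and yields the bound $O\left(y^cT^{-1}|\log y|^{-1}\right)$; this is the standard argument, and the paper itself only cites Chapter 17 of \cite{HD} for this lemma. The genuine gap is in your treatment of the complementary regime $T|\log y|<1$, where the target is $O(y^c)$ with a constant uniform in $c$ --- and uniformity in $c$ matters here, since the paper applies the lemma with $c=C=1/\log y_1\to 0$. Your central piece $|t|\le 1$ is not ``trivially $O(y^c)$'': taking absolute values gives
$$y^c\int_{-1}^{1}\frac{dt}{\sqrt{c^2+t^2}}\asymp y^c\log(2/c),$$
which is unbounded as $c\to 0$. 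Moreover, integrating by parts on the tail $1\le|t|\le T$ against $y^{c+it}$ produces boundary and remainder terms of size $y^c/|\log y|$, which in the regime $T|\log y|<1$ can be as large as $Ty^c$, far exceeding $O(y^c)$; integrating the other factor instead costs $y^c\log T$, also too large. So the second regime is not established as written.

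Two standard repairs: (i) the circular-arc contour --- replace the segment by the arc of the circle $|s|=R=\sqrt{c^2+T^2}$ lying to the left for $y>1$ (collecting the residue $1$) or to the right for $0<y<1$; on the arc $|y^s|\le y^c$ and $|s|=R$, so its contribution is at most $\frac{1}{2\pi}\cdot\frac{y^c}{R}\cdot 2\pi R=y^c$, which gives the $O(y^c)$ bound for all $T$ and uniformly in $c$, making the $\min$ immediate. (ii) An elementary cancellation argument valid when $T|\log y|\le 1$: write $y^{it}=1+O\left(|t\log y|\right)$, and use $\left|\int_{-T}^{T}\frac{dt}{c+it}\right|=2\arctan(T/c)\le\pi$ together with $\int_{-T}^{T}\frac{|t\log y|}{|c+it|}\,dt\le 2T|\log y|\le 2$; note the boundedness of the first integral comes from the oddness of the imaginary part of $1/(c+it)$, i.e.\ from cancellation, not from absolute values. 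Either of these closes the gap; the rest of your argument stands.
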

\begin{proof} This is a standard result. See Chapter 17 in \cite{HD}. \end{proof}

\section{The Major Arcs}

The major arcs are defined by
\begin{equation} \label{majorarcdef}
 \majorarc = \bigcup_{q \leq Q_1}\bigcup_{\substack{a=1 \\ \gcd (a,q) = 1}}^{q} J_{q,a}
\end{equation}
where
\[ J_{q,a} = \left[\frac{a}{q}-\frac{1}{qQ},\frac{a}{q}+\frac{1}{qQ} \right] , \; Q_1 = (\log x)^c \; \; \mbox{and} \; \; Q = x^{1 - \varepsilon}, \]
for some $c> 0$ fixed and suitable.  Note that if $x$ is sufficiently large, $Q> Q_1$ and the intervals $J_{q,a}$ are disjoint.  We shall henceforth assume that this is the case. \newline

For $\alpha \in \majorarc$, we write
\[ \alpha = \frac{a}{q} + \beta, \; \mbox{with} \; |\beta| \leq \frac{1}{qQ}. \]

Now set
\[ S_1(\alpha) = \sum_{m \leq z}\Lambda(m)e(\alpha m) \; \; \; \mbox{and} \; \; \; S_2(\alpha) = \sum_{n \leq x}e(-\alpha n^3). \]

$S_1(\alpha)$ has been computed in Section 3 of \cite{SBLZ} and we have
\[ S_1( \alpha ) = T_1(\alpha) + E_1(\alpha) + O \left( \log^2 z \right) \]
where
\begin{equation} \label{T1E1def}
 T_1(\alpha) = \frac{\mu(q)}{\varphi(q)}\sum_{m \leq z}e(\beta m) \;\;\; \mbox{and} \;\;\; E_1(\alpha) = \frac{1}{\varphi(q)} \sum_{\chi \bmod{q}} \tau(\overline{\chi}) \chi(a) \sum_{m\leq z}^{\sharp} \chi(m) \Lambda(m) e (\beta m)
 \end{equation}
with $\varphi$ being the Euler totient function,
\begin{equation} \label{gausssumdef}
 \tau(\chi) = \sum_{r \bmod q}\chi(r)e\left(\frac{r}{q}\right)
\end{equation}
the Gauss sum and the $\sharp$ of the summation symbol in $E_1(\alpha)$ having the same meaning as that in Lemma~\ref{mikawalem}. \newline

Now to compute $S_2(\alpha)$, we first note the following well-known relations between additive and multiplicative characters.
\begin{equation} \label{addmultcharrel}
e \left( \frac{a}{q} m \right) = \frac{1}{\varphi(q)} \sum_{\chi \bmod{q}} \chi(am) \tau(\overline{\chi})
\end{equation}
provided that $\gcd (am, q) =1$.  Therefore, using \eqref{addmultcharrel}, we have
\[ S_2(\alpha) = \sum_{n \leq x}e\left(-\left(\frac{a}{q} + \beta\right)n^3\right) = \sum_{d|q}\frac{1}{\varphi(q_1^*)}\sum_{\chi \bmod q_1^*}\chi(-ad^*)\tau(\bar{\chi})\sum_{\substack{n\leq x\\ \gcd(n,q)=d}}\chi^3(n^{*})e(-\beta n^3), \]
where
\[ q^* = \frac{q}{d}, \; n^* = \frac{n}{d}, \; d^* = \frac{d^2}{\gcd(d^2, q^*)} \; \;
\mbox{and} \; \; q_1^* = \frac{q^*}{\gcd(d^2, q^*)}. \]
So
\begin{eqnarray*}
 S_2(\alpha) &=& \sum_{d|q}\frac{1}{\varphi(q_1^*)}\sum_{\substack{\chi \bmod q_1^*\\ \chi^3  = \chi_0}}\chi(-ad^*)\tau(\bar{\chi})\sum_{\substack{n\leq x\\ \gcd(n,q)=d}}e(-\beta n^3)\\
 && \hspace*{1cm}  +\sum_{d|q}\frac{1}{\varphi(q_1^*)}\sum_{\substack{\chi \bmod q_1^*\\ \chi^3 \not= \chi_0}}\chi(-ad^*)\tau(\bar{\chi})\sum_{\substack{n\leq x\\ \gcd(n,q)=d}}\chi^3(n^{*})e(-\beta n^3)\\
&=& T_2(\alpha) + E_2(\alpha),
\end{eqnarray*}
say.  Now performing some computations similar to those in Section 3 of \cite{SBLZ}, we have
\[ T_2(\alpha) = \sum_{d|q}\frac{1}{\varphi(q_1^*)} \sum_{\substack{l \bmod q_1^* \\ \gcd(l,q_1^*)=1}}e\left(-\frac{ad^*l^3}{q_1^*}\right)\sum_{\substack{n\leq x\\ \gcd(n,q)=d}}e(-\beta n^3). \]
Therefore,
\begin{equation} \label{majorarc1}
\begin{split}
\int\limits_{\majorarc} \sum_{m \leq z} & \Lambda(m)e(\alpha m)\sum_{n\leq x}e(-\alpha (n^3+k)) \dif \alpha \\
& = \int\limits_{\majorarc} \left( T_1(\alpha) + E_1( \alpha) + O \left( \log^2 x \right) \right) \left( T_2(\alpha) + E_2(\alpha) \right) e(-\alpha k) \dif \alpha .
\end{split}
\end{equation}

\section{The Singular Series}

The main term will be given by the following
\begin{equation*}
\begin{split}
 \int\limits_{\majorarc}T_1(\alpha)& T_2(\alpha)e(-k\alpha) \dif \alpha \\
& = \sum_{q \leq Q_1}\frac{\mu(q)}{\varphi(q)}\sum_{\substack{a \bmod q \\ \gcd(a,q)=1}}e\left(-\frac{ak}{q}\right)\sum_{d|q}\frac{1}{\varphi(q_1^*)}\sum_{\substack{l \bmod q_1^*\\ \gcd(l,q_1^*)=1}}e\left(-\frac{ad^*l^3}{q_1^*}\right)\int\limits_{|\beta| < \frac{1}{qQ}}\Pi_{q,d}(\beta) \dif \beta
\end{split}
\end{equation*}
where
\[ \Pi_{q,d}(\beta) = \sum_{m \leq z}e(\beta m)\sum_{\substack{n \leq x\\ \gcd(n,q)=d}}e(-\beta n^3)e(-\beta k). \]
As in \cite{SBLZ}, we have
\[  \int\limits_{|\beta| < \frac{1}{qQ}}\Pi_{q,d}(\beta) \dif \beta = \frac{\varphi(q/d)}{q} x + O\left(\left(\frac{qQx}{d}\right)^{1/2} \right). \]
Hence
\begin{equation} \label{T1T2-2}
\begin{split}
\int\limits_{\majorarc} & T_1(\alpha) T_2(\alpha)e(-k\alpha) \dif \alpha \\
&= \sum_{q \leq Q_1}\frac{\mu(q)}{\varphi(q)}\sum_{\substack{a \bmod q\\ \gcd(a,q)=1}}e\left(-\frac{ak}{q}\right)\sum_{d|q}\frac{1}{\varphi(q_1^*)} \sum_{\substack{l \bmod q_1^*\\ \gcd(l,q_1^*)=1}}e\left(-\frac{ad^*l^3}{q_1^*}\right)\left(\frac{\varphi(q/d)}{q}x+O\left(\left(\frac{qQx}{d}\right)^{1/2}\right)\right)
\end{split}
\end{equation}
Due to the presence of $\mu(q)$, it suffices to consider only the $q$'s that are square-free.  Therefore, we immediately get $d^* = d^2$ and $q_1^* = q^* = q/d$.  Thus the right-hand side of \eqref{T1T2-2} becomes
\begin{equation}\label{2}
 x\sum_{q \leq Q_1}\frac{\mu(q)}{\varphi(q)q}\sum_{\substack{a \bmod q\\ \gcd(a,q)=1}}e\left(-\frac{ak}{q}\right)\sum_{d|q}\sum_{\substack{l \bmod q/d\\ \gcd(l,q/d)=1}}e\left(-\frac{a (dl)^3}{q}\right)+ O\left( \sqrt{xQ}(\log x)^{c_1} \right).
\end{equation}
We observe that
\begin{equation} \label{Sigmadef}
 \Sigma(q) := \sum_{\substack{a \bmod q\\ \gcd(a,q)=1}}e\left(-\frac{ak}{q}\right)\sum_{d|q}\sum_{\substack{l \bmod q/d\\ \gcd(l,q/d)=1}}e\left(-\frac{a(dl)^3}{q}\right) = \sum_{\substack{a \bmod q\\ \gcd(a,q)=1}}e\left(-\frac{ak}{q}\right) \sum_{r \bmod q}e\left(-\frac{ar^3}{q}\right).
 \end{equation}
Suppose that $q = p$, a prime number.  Then
\[ \Sigma(p) = \sum_{\substack{a \bmod p\\ \gcd(a,p)=1}}\sum_{r \bmod p}e\left(\frac{-a(k+r^3)}{p}\right) = (p-1)n_{k,p} + (-1)(p - n_{k,p}) = p(n_{k,p} - 1) \]
where $n_{k,p}$ is the number of solutions to $k+r^3 \equiv 0 \pmod{p}$ in $\intz/p\intz$.
Therefore, letting $\chi_{1,p}$ and $\chi_{2,p}$ be the two cubic characters not equal to $\chi_0$ for a prime $p \equiv 1$ mod $3$,
\[ \Sigma(p) = \begin{cases}
  p \left( \chi_{1,p}(-k) + \chi_{2,p}(-k) \right),  & \mbox{if }p \equiv 1 \pmod{3} \\
  & \\
  0, & \mbox{if }p = 3 \mbox{ or } p \equiv 2 \pmod{3}
  \end{cases} \]
since the map $r \longrightarrow r^3$ is a bijection on $\intz/p\intz$ when $p = 3$ or $p \equiv 2 \pmod{3}$. Furthermore, $\Sigma(q)$ is multiplicative in $q$.  To see this, we have if $\gcd(q_1,q_2)=1$, then
 \[ \Sigma(q_1)\Sigma(q_2) = \sum_{r_1\bmod q_1}\sum_{\substack{a_1 \bmod q_1\\(a_1,q_1)=1}}\sum_{r_2\bmod q_2}\sum_{\substack{a_2 \bmod q_2\\(a_2,q_2)=1}}e\left( f(k,a_1,a_2,q_1,q_2,r_1,r_2\right) \]
where
\begin{equation*}
\begin{split}
 f(k,a_1,a_2,q_1,q_2,r_1,r_2) & = -k\frac{a_1q_2+a_2q_1}{q_1q_2}-\frac{a_1q_2(q_2r_1)^3+a_2q_1(q_1r_2)^3}{q_1q_2} \\
 & \equiv -k \frac{a_1q_2+a_2q_1}{q_1q_2} - \frac{(a_1q_2+a_2q_1)(q_1r_2+q_2r_1)^3}{q_1q_2} \pmod{1}.
 \end{split}
 \end{equation*}
 It is easy to observe that $a_1q_2+a_2q_1$ runs over the primitive residue classes modulo $q_1q_2$ as $a_1$ and $a_2$ run over the primitive residue classes modulo $q_1$ and $q_2$, respectively.  The same can be said for $q_2r_1+q_1r_2$.  Hence
 \[ \Sigma(q_1) \Sigma(q_2) = \Sigma(q_1q_2). \]
 Therefore, we have, for $q$ squarefree,
\begin{equation} \label{Sigmaeval}
 \Sigma(q) = \prod_{p|q}p(n_{k,p} - 1) = \begin{cases}
  \displaystyle{q\prod_p(\chi_{1,p}(-k) + \chi_{2,p}(-k))},  & \mbox{if }p \equiv 1 \pmod{3} \mbox{ for all } p|q , \\
  & \\
  0, & \mbox{otherwise.}
  \end{cases}
  \end{equation}

Hence, combining \eqref{T1T2-2}, \eqref{2}, \eqref{Sigmadef} and \eqref{Sigmaeval} yields
\begin{equation} \label{mainterm}
\begin{split}
\int\limits_{\majorarc} T_1(\alpha) T_2(\alpha)e(-k\alpha) \dif \alpha & =
x\sum_{q \leq Q_1}\frac{\mu(q)}{\varphi(q)q} \Sigma(q) + O\left( \sqrt{Qx}(\log x)^{c_1} \right) \\
&= x\sum_{q =1 }^{\infty}\frac{\mu(q)}{\varphi(q)} \prod_{p|q}(n_{k,p}-1) - x\Psi(k) + O\left( \sqrt{Qx}(\log x)^{c_1} \right) \\
&= \mathfrak{S}(k)x + O\left( |\Psi(k)|x + \sqrt{Qx}(\log x)^{c_1} \right)
\end{split}
\end{equation}
where
\begin{equation} \label{Psikdef}
 \Psi(k) = \sum_{q > Q_1}\frac{\mu(q)}{\varphi(q)} \prod_{p|q} \left( n_{k,p}-1 \right).
 \end{equation}

We note that the infinite sums appearing above are conditionally convergent, and this is shown later section 5.

\section{The second moment of $\Psi(k)$}

We aim to estimate the second moment of $\Psi(k)$, defined in \eqref{Psikdef}, in this section.  We shall assume that all $q$'s appearing in the subsequent computations are divisible only by primes $p \equiv 1$ mod $3$ and are squarefree.  Dividing the $q$-range into three pieces, we have
\begin{equation} \label{splitPhi}
\sum_{\substack{k \leq y \\ \mu(k)^2 = 1}} |\Psi(k)|^2 \ll  \Psi_1 + \Psi_2 + \Psi_3,
\end{equation}
where
\[ \Psi_1 = \sum_{k \leq y} \left|\sum_{Q_1 < q \leq U}\frac{\mu(q)}{\varphi(q)}\prod_{p|q} \left(\chi_{1,p}(-k)+\chi_{2,p}(-k) \right)\right|^2, \]
\[ \Psi_2 = \sum_{\substack{k \leq y \\ \mu(k)^2 = 1}}\left|\sum_{U < q \leq 2^vU}\frac{\mu(q)}{\varphi(q)}\prod_{p|q}(\chi_{1,p}(-k)+\chi_{2,p}(-k))\right|^2, \]
and
\[ \Psi_3 = \sum_{k \leq y}\left|\sum_{q > 2^vU}\frac{\mu(q)}{\varphi(q)}\prod_{p|q}(\chi_{1,p}(-k)+\chi_{2,p}(-k))\right|^2 \]
with $U$ and $v$ being parameters to be chosen later. \newline

Expanding the modulus square, we have
\begin{equation} \label{Psi1expand}
\begin{split}
\Psi_1 \leq y& \sum_{Q_1 < q \leq U} \frac{\mu(q)^2}{\varphi(q)^2}2^{2\omega(q)}  \\
& + \sum_{\substack{Q_1 < q_1,q_2 \leq U\\q_1 \not= q_2}}\frac{\mu(q_1)\mu(q_2)}{\varphi(q_1)\varphi(q_2)}\sum_{k \leq y}\prod_{p|q_1}(\chi_{1,p}(-k)+\chi_{2,p}(-k))\prod_{p|q_2}(\chi_{1,p}(-k)+\chi_{2,p}(-k))
\end{split}
\end{equation}
where $\omega(q)$ is the number of distinct primes dividing $q$. \newline

Mindful of the well-known estimate
\begin{equation} \label{phiest}
 \varphi(q) \gg \frac{q}{\log \log (10 q)},
 \end{equation}
the first term on the right-hand side of \eqref{Psi1expand} can be easily disposed.  We have
\begin{equation} \label{Psi1diag}
y\sum_{Q_1 < q \leq U}\frac{\mu(q)^2}{\varphi(q)^2}2^{2\omega(q)} \ll \frac{y}{Q_1^{1-\varepsilon}}.
\end{equation}
The second term on the right-hand side of \eqref{Psi1expand} is
\begin{equation} \label{Psi1offdiag}
\begin{split}
\sum_{\substack{Q_1 < q_1,q_2 \leq U\\q_1 \not= q_2}} & \frac{\mu(q_1)\mu(q_2)}{\varphi(q_1)\varphi(q_2)} \sum_{k \leq y}\prod_{p|q_1}(\chi_{1,p}(-k)+\chi_{2,p}(-k))\prod_{p|q_2}(\chi_{1,p}(-k)+\chi_{2,p}(-k)) \\
&\ll \sum_{\substack{Q_1 < q_1,q_2 \leq U\\q_1 \not= q_2}}\frac{\sqrt{q_1q_2}\log(q_1q_2)2^{\omega(q_1)+\omega(q_2)}}{\varphi(q_1)\varphi(q_2)} \ll \log U \left(\sum_{Q_1 < q \leq U}\frac{\sqrt{q}2^{\omega(q)}}{\varphi(q)}\right)^2 \ll U^{1 + \varepsilon}
\end{split}
\end{equation}
using the Polya-Vinogradov inequality, Lemma~\ref{polyvino}, and \eqref{phiest}.  Note the resulting characters after expanding the products on the left-hand side of \eqref{Psi1offdiag} for are of modulus $q_1q_2$ and non-principle since $q_1 \neq q_2$. \newline

Combining \eqref{Psi1expand}, \eqref{Psi1diag} and \eqref{Psi1offdiag}, we arrive at
\begin{equation} \label{Psi1est}
\Psi_1 \ll \frac{y}{Q_1^{1-\varepsilon}} + U^{1+\varepsilon}.
\end{equation}

To estimate $\Psi_2$, we first use Cauchy's inequality and get
\begin{equation} \label{Psi2aftercauchy}
 \Psi_2 \ll v\sum_{r=1}^{[v+1]}\sum_{\substack{k \leq y \\ \mu(k)^2 = 1}}\left|\sum_{2^{r-1}U < q \leq 2^rU}\frac{\mu(q)}{\varphi(q)}\prod_{p|q}(\chi_{1,p}(-k)+\chi_{2,p}(-k))\right|^2.
 \end{equation}
Now expanding the product over $p$ inside the modulus signs in \eqref{Psi2aftercauchy}, we have
\begin{equation} \label{chiexpand}
\sum_{\substack{k \leq y \\ \mu(k)^2 = 1}}\left|\sum_{2^{r-1}U < q \leq 2^rU}\frac{\mu(q)}{\varphi(q)}\prod_{p|q}(\chi_{1,p}(-k)+\chi_{2,p}(-k))\right|^2
 = \sum_{\substack{k \leq y \\ \mu(k)^2 = 1}}\left| \sum_{2^{r-1}U < q \leq 2^rU} \ \sideset{}{^{\star}}\sum_{\substack{\chi \bmod q\\\chi^3 = \chi_0\\\chi \not= \chi_0}}\frac{\mu(q)}{\varphi(q)}\chi(k)\right|^2
\end{equation}
Note that all the resulting $\chi$'s appearing on the right-hand side of \eqref{chiexpand} are primitive modulo $q$.  Using Lemma~\ref{B&Y} together with the duality principle, Lemma~\ref{dual}, the right-hand side of \eqref{chiexpand} is
\[ \ll y^{\varepsilon} \left( 2^{r-1}U \right)^{2/9 + \varepsilon} + y^{1 + \varepsilon}\left(\frac{1}{2^{r-1}U}\right)^{1/3 - \varepsilon}. \]
Summing the above over $r$ from $1$ to $R =[\log_2 \left( (y^{3 + 3\varepsilon/2})/U \right) ]$, we get
\begin{equation} \label{Psi2est1}
 \sum_{r=1}^{R}\sum_{\substack{k \leq y \\ \mu(k)^2 = 1}}\left|\sum_{2^{r-1}U < q \leq 2^rU}\frac{\mu(q)}{\varphi(q)}\prod_{p|q}\left( \chi_{1,p}(-k)+\chi_{2,p}(-k) \right)\right|^2 \ll y^{2/3 + \varepsilon} + \frac{y^{1 + \varepsilon}}{U^{1/3-\varepsilon}}.
 \end{equation}
For the $r$'s with $R < r \leq [v+1]$, we shall use the large sieve for number fields, Lemma~\ref{huxleylemma}.  It is easy to reduce the expression in question to a sum of similar shape with the additional summation condition $\gcd (k, 3)=1$ included.  Also recall that $q$ is assumed to be square-free and have only prime factors that are congruent to 1 modulo 3.  Hence it suffices to estimate
\begin{equation}\label{1}
 \sum_{\substack{n \in \intz[\omega] \\ \mathcal{N}(n) \leq y^2 \\ n \equiv 1 \bmod{3} \\ \mu(n)^2=1}} \left| \sum_{\substack{\pi \in \mathbb{Q}[\omega]\\2^{r-1}U < N(\pi) = q \leq 2^rU}} a_{\pi} \chi_{\pi}(n)\right|^2,
\end{equation}
where $\omega = e(1/3)$ here and after, $a_{\pi} = \mu(q)/\varphi(q)$, $\chi_{\pi}(n) = \left( \frac{n}{\pi} \right)_3$ where $( \frac{\cdot}{\cdot} )_3$ is the cubic residue symbol.  Recall that by cubic reciprocity $\chi_{\pi}(n) = \chi_n(\pi)$ for all the $n$ and $\pi$ appearing above.  Using this, Lemma~\ref{huxleylemma} and \eqref{phiest}, we have that \eqref{1} is majorized by
\[ \ll (y^4 + 2^rU)\sum_{\substack{\mathcal{N}(\pi) = q\\2^{r-1}U < q \leq 2^rU}}\frac{1}{\varphi(q)^2} \ll (y^4 + 2^rU) \frac{\log\log(2^rU)\log(2^rU)}{2^rU}. \]

Therefore,
\[ \sum_{\substack{k \leq y \\ \mu(k)^2 = 1}}\left|\sum_{2^{r-1}U < q \leq 2^rU}\frac{\mu(q)}{\varphi(q)}\prod_{p|q}(\chi_{1,p}(-k)+\chi_{2,p}(-k))\right|^2 \ll \left( 2^rU + y^4 \right)\frac{\log\log(2^rU)\log(2^rU)}{2^rU}. \]
Summing the above over $r$ from $R$ to $[v+1]$, we have
\begin{equation} \label{Psi2est2}
\begin{split}
\sum_{r=R}^{[v+1]} \sum_{\substack{k \leq y \\ \mu(k)^2 = 1}} & \left|\sum_{2^{r-1}U < q \leq 2^rU}\frac{\mu(q)}{\varphi(q)}\prod_{p|q}(\chi_{1,p}(-k)+\chi_{2,p}(-k))\right|^2 \\
&\ll v^3 + v^2\log U + v(\log U)^2 + y^{1-3\varepsilon/2}\left( v+ \log U \right) \log(v+ \log U).
\end{split}
\end{equation}

Therefore, \eqref{Psi2est1} and \eqref{Psi2est2} give that
\begin{equation} \label{Psi2est}
\Psi_2 \ll v\left(y^{2/3 + \varepsilon} + \frac{y^{1 + \varepsilon}}{U^{1/3-\varepsilon}} + v^3 + v^2\log U + v(\log U)^2 + y^{1-3\varepsilon/2}(v+ \log U)\log(v+ \log U)\right).
\end{equation}

It still remains to consider $\Psi_3$.  First, note that for primes $p \equiv 1 \bmod 3$, these primes split into prime ideals in $\mathbb{Q}[ \omega ]$ as $p = \pi_{p,1}\pi_{p,2}$. Let us consider
\begin{equation*}
\begin{split}
 f(s,k) & = \prod_{p \equiv 1 \bmod 3}\left(1-\frac{\chi_{1,p}(-k) + \chi_{2,p}(-k)}{(p-1)p^s}\right) = \prod_{p \equiv 1 \bmod 3}\left(1-\frac{n_{k,p} - 1}{(p-1)p^s}\right) \\
& = \sum_{q}\frac{\mu(q)}{\varphi(q)q^s} \prod_{p|q}(n_{k,p}-1) = \prod_{\substack{p \equiv 1 \bmod 3\\ p = \pi_{p,1}\pi_{p,2}}}\left(1-\frac{\left(\frac{k}{\pi_{p,1}}\right)_3+\left(\frac{k}{\pi_{p,2}}\right)_3}{(p-1)p^s}\right).
\end{split}
\end{equation*}
Clearly, $f(0,k) = \mathfrak{S}(k)$ and $f(s,k)$ has no poles with $\Re(s) >0$.  Set
\[ b_q = \frac{\mu(q)}{\varphi(q)} \prod_{p|q}(n_{k,p}-1) \]
so that the Dirichlet series generated by $b_q$ is precisely $f(s,k)$. \newline

Consider the Hecke $L$-function
\begin{eqnarray*} &&L\left(s+1,\left(\frac{k}{\cdot}\right)_3\right) = \prod_{\pi}\left(1-\frac{\left(\frac{k}{\pi}\right)_3}{N(\pi)^{s+1}}\right)^{-1}\nonumber\\
&=& \left(1-\frac{\left(\frac{k}{(1-e(1/3))}\right)_3}{3^{s+1}}\right)^{-1}\prod_{\substack{p \equiv 1 \bmod 3\\p = \pi_{p,1}\pi_{p,2}}}\left(1-\frac{\left(\frac{k}{\pi_{p,1}}\right)_3}{p^{s+1}}\right)^{-1} \left(1-\frac{\left(\frac{k}{\pi_{p,2}}\right)_3}{p^{s+1}}\right)^{-1}\prod_{p \equiv 2 \bmod 3}\left(1-\frac{\left(\frac{k}{p}\right)_3}{p^{2s+2}}\right)^{-1},
\end{eqnarray*}
where the product is over all prime ideals $\pi$ of $\intz[\omega]$.  This is a Hecke $L$-function.  Set
\[ f(s,k) = L^{-1}\left( s+1,\left(\frac{k}{\cdot}\right)_3 \right)h(s,k) . \]
Standard arguments will show that $h(s,k)$ is bounded by an absolute constant for $\Re (s) > -1/2 + \delta$ for any fixed $\delta >0$. \newline

Using Perron's formula, Lemma~\ref{perron}, we have for any $C>0$,
\begin{equation} \label{8}
\begin{split}
 \sum_{y_1 \leq q \leq y_2}b_q = \frac{1}{2\pi i}\int\limits_{C-iT}^{C+iT}& L^{-1}\left(s+1,\left(\frac{k}{\cdot}\right)_3\right)h(s,k)\frac{y_2^s - y_1^s}{s}ds \\
 & + O\left( \sum_{j=1}^2 \sum_{q}|b_q|\left( \frac{y_j}{q} \right)^C\min \left( 1,T^{-1} \left| \log \frac{y_j}{q} \right|^{-1} \right) \right)
\end{split}
\end{equation}

Now shift the line of integration to $\sigma-iT$ and $\sigma + iT$, with
\[ \sigma = -\frac{c'}{2 d\log \Delta(|T|+3)} \]
with $c'$, $d$ and $\Delta$ as given in Lemma~\ref{zerofree}.  Note that the same lemma also ensures that we do not pick up any residues from the zeros of the Hecke $L$-function.  Moreover, the residue at $s=0$ is cancelled out.  We shall only estimate the contributions from the terms with $y_1$.  The terms with $y_2$ can be treated in the same way and satisfy similar bounds.  The integral along the vertical line segment $l$ from $\sigma-iT$ to $\sigma + iT$ is majorized by
\begin{equation}\label{9}
\sup_{s \in l}\left|L^{-1}\left(s,\left(\frac{k}{\cdot}\right)_3\right)\right|\log(\Delta |T|+3) \exp \left( -\frac{1 \log y_1}{2 \log (\Delta|T|+3)} \right) \ll \exp \left( -c_1 \sqrt{\log y_1} \right),
\end{equation}
for some fixed $c_1 >0$.  The last inequality above comes from setting $T$ to satisfy $\log y_1 = (\log T)^2$ and the fact that the inverse of the $L$-function on that line segment is bounded by a fixed power of $\log T$ using an analysis similar to that for the inverse of the Riemann zeta function in some zero free region found in Theorems 3.10 and 3.11 of \cite{ET}. \newline

The integral along the horizontal line segments $l_{\pm}$ from $\sigma\pm iT$ to $C\pm iT$ is majorized by
\begin{equation}\label{10}
\frac{ y_1^C \sup_{s \in l_{\pm}}\left|L^{-1}\left(s,\left(\frac{k}{}\right)_3\right)\right|}{T} \ll \exp \left( -c_2\sqrt{\log y_1} \right),
\end{equation}
with some $c_2 >0$, upon setting $C= 1/\log y_1$.  Futhermore, with our choices of $C$ and $T$, the $O$-term in \eqref{8} is majorized by
\begin{equation} \label{Otermest}
\exp \left( -c_3\sqrt{\log y_1} \right)
\end{equation}
for some $c_3>0$.  The terms with $y_2$ in \eqref{8} satisfies similar bounds.  \newline

Now, taking $y_1 = 2^v U$ and $y_2$ to infinity, we have that, upon combining \eqref{8}, \eqref{9}, \eqref{10} and \eqref{Otermest},
\begin{equation} \label{Psi3est}
 \Psi_3 = \sum_{k \leq y} \left| \sum_{q > 2^vU}\frac{\mu(q)}{\varphi(q)}\prod_{p|q}(\chi_{1,p}(-k)+\chi_{2,p}(-k)) \right|^2 \ll y \exp \left( -c_4 \sqrt{2^vU} \right),
 \end{equation}
for some $c_4 >0$. \newline

Now setting
\[ U=\sqrt{y}, \; v = \log_2\frac{\exp(y^{\varepsilon/2})}{U} \]
and combining \eqref{splitPhi}, \eqref{Psi1est}, \eqref{Psi2est} and \eqref{Psi3est}, we have
\begin{equation} \label{Phiest}
\sum_{\substack{k \leq y \\ \mu(k)^2 =1}} \left| \Phi(k) \right|^2 \ll \frac{y}{(\log x)^{c_5}},
 \end{equation}
for all $c_5 >0$.

\section{The Error Terms from the Major Arcs}

We now consider the second moment over $k$ of the error terms from the major arcs. By Bessel's inequality, Lemma~\ref{bessineq}, applied to the vector space $V=L^2([0,1])$ with $\xi(\alpha) = T_1(\alpha)E_2(\alpha)$ for $\alpha\in \mathfrak{M}$ and $\xi(\alpha) = 0$ for $\alpha \not\in \mathfrak{M}$, and $\phi_k=e(-\alpha k)$, we have
\begin{equation*}
\begin{split}
\sum_{k \leq y} & \left|\int\limits_{\mathfrak{M}}T_1(\alpha)E_2(\alpha)e(-\alpha k) \dif \alpha\right|^2 \ll \int\limits_{\mathfrak{M}} \left| T_1(\alpha)E_2(\alpha) \right|^2 \dif \alpha \\
&= \sum_{q < Q_1}\sum_{\substack{a \bmod q\\ \gcd(a,q)=1}} \left( \ \int\limits_{|\beta| < \delta}\left|T_1\left(\frac{a}{q} + \beta\right)E_2\left(\frac{a}{q}+\beta\right)\right|^2 \dif \beta + \int\limits_{\delta < |\beta| < 1/(qQ)}\left|T_1\left(\frac{a}{q}+\beta\right)E_2\left(\frac{a}{q}+\beta\right)\right|^2 \dif \beta \right) ,
\end{split}
\end{equation*}
where $\delta$ is to be chosen later.  Because $T_1(\alpha)$, defined in \eqref{T1E1def}, involves a geometric sum, it can be observed that
\[ T_1 \left( \frac{a}{q} + \beta \right) \ll \min \left( z, |\beta|^{-1} \right). \]

Therefore,
\begin{equation} \label{T1E2aftersplit}
\begin{split}
\sum_{k \leq y} & \left|\int\limits_{\mathfrak{M}}T_1(\alpha)E_2(\alpha)e(-\alpha k) \dif \alpha\right|^2 \\
& \ll  z^2 \sum_{q < Q_1}\sum_{\substack{a \bmod q\\ \gcd(a,q)=1}} \ \int\limits_{|\beta| < \delta}\left|E_2\left(\frac{a}{q}+\beta\right)\right|^2 \dif \beta + \delta^{-2} \int\limits_{\majorarc}\left|E_2\left(\frac{a}{q}+\beta\right)\right|^2 \dif \beta .
\end{split}
\end{equation}
Performing computations similar to those on pages 978 of \cite{SBLZ} to estimate the integral on the right-hand side of \eqref{T1E2aftersplit}, we have that the right-hand side of \eqref{T1E2aftersplit} is
\begin{equation} \label{T1E2intest}
 \ll z^2 x (\log x)^{c_6} \delta^2 + \frac{x (\log x)^{c_7}}{\delta^2 Q^2},
\end{equation}
for some fixed positive $c_6$ and $c_7$.  Now, taking $\delta = 1/\sqrt{z}$, \eqref{T1E2aftersplit} and \eqref{T1E2intest} give
\begin{equation} \label{T1E2est}
\sum_{k \leq y}  \left|\int\limits_{\mathfrak{M}}T_1(\alpha)E_2(\alpha)e(-\alpha k) \dif \alpha\right|^2 \ll zx(\log x)^{c_6} + \frac{zx(\log x)^{c_7}}{Q^2}.
\end{equation}

Again, by Bessel's inequality, Lemma~\ref{bessineq}, we have
\begin{equation} \label{T2E1bess}
\begin{split}
 \sum_{k \leq y}& \left|\int\limits_{\mathfrak{M}}T_2(\alpha)E_1(\alpha)e(-\alpha k) \dif \alpha\right|^2 \\
 & \ll \int\limits_{\mathfrak{M}}|T_2(\alpha)E_1(\alpha) |^2 \dif \alpha \ll \sup_{\alpha \in \mathfrak{M}}|T_2(\alpha)|^2\int\limits_{\mathfrak{M}}|E_1(\alpha)|^2 \dif \alpha \ll x^2\int\limits_{\mathfrak{M}}|E_1(\alpha)|^2 \dif \alpha.
 \end{split}
 \end{equation}
In a manner similar to the estimate of the analogous terms in \cite{SBLZ}, we get, using Lemma~\ref{mikawalem}, that
\begin{equation} \label{4}
\int\limits_{\mathfrak{M}}|E_1(\alpha)|^2 \dif \alpha \ll \sum_{q< Q_1}\frac{q}{\varphi(q)}(qQ)^{-2}\mathfrak{J}(q,Q/2) + Q_1^3Q(\log x)^2 \ll \sum_{q<Q_1} \frac{q}{\varphi(q)}z(\log z)^{-A},
\end{equation}
for any $A>0$, with the implied constant depending on $A$.  Here $\mathfrak{J}$ is the same as that appearing in Lemma~\ref{mikawalem}.  Thus, \eqref{T2E1bess} and \eqref{4} give that
\begin{equation} \label{T2E1est}
\sum_{k \leq y}\left|\int\limits_{\mathfrak{M}}T_2(\alpha)E_1(\alpha)e(-\alpha k) d\alpha\right|^2 \ll \frac{x^2z}{(\log x)^{c_8}}
\end{equation}
for any $c_8 > 0$. \newline

Finally, using Bessel's inequality, Lemma~\ref{bessineq}, \eqref{4} and the trivial estimate for $E_2(\alpha)$, we have
\begin{equation} \label{E1E2est}
 \sum_{k \leq y}\left|\int\limits_{\mathfrak{M}}E_1(\alpha)E_2(\alpha)e(-\alpha k) \dif \alpha\right|^2 \ll \int\limits_{\mathfrak{M}}|E_1(\alpha)E_2(\alpha) |^2 \dif \alpha \ll \sup_{\alpha \in \mathfrak{M}}|E_2(\alpha)|^2\int\limits_{\mathfrak{M}}|E_1(\alpha) |^2 \dif \alpha \ll \frac{x^2 z}{(\log x)^{c_9}}
 \end{equation}
for any $c_9 > 0$, with the implied constant depending on $c_5$. \newline

Combining \eqref{T1E2est}, \eqref{T2E1est} and \eqref{E1E2est}, we have that
\begin{equation} \label{majorarcerrest}
\sum_{k \leq y} \left|\int\limits_{\mathfrak{M}}(T_1(\alpha)E_2(\alpha) + T_2(\alpha)E_1(\alpha) + E_1(\alpha)E_2(\alpha))e(-\alpha k) d\alpha\right|^2 \ll zx(\log x)^{c_6} + \frac{zx(\log x)^{c_7}}{Q^2} + \frac{x^2z}{(\log x)^{c_{10}}},
\end{equation}
for any $c_{10} >0$ with the implied constant depending on $c_{10}$.

\section{The Minor Arcs}

Finally, it still remains to consider the contribution from the minor arcs.  We have
\begin{equation} \label{minorarc}
 \sum_{\substack{k \leq y\\ \mu^2(k) = 1}}\left| \int\limits_{\minorarc} S_1(\alpha) S_2(\alpha)e(-\alpha k) \dif \alpha \right|^2=  \sum_{\substack{k \leq y\\ \mu^2(k) = 1}}\left|\int\limits_{\minorarc} \sum_{m \leq z}\Lambda(m)e(\alpha m)\sum_{n \leq x}e(-\alpha(n^3 + k)) \dif \alpha\right|^2,
 \end{equation}
where
\[ \minorarc = \left[ \frac{1}{Q} , 1+ \frac{1}{Q} \right] - \majorarc. \]
By Bessel's inequality, Lemma~\ref{bessineq}, \eqref{minorarc} is
\begin{equation} \label{minorarcbess}
 \ll \int\limits_{\mathfrak{m}}|S_1(\alpha)S_2(\alpha)|^2 \dif \alpha \ll \sup_{\alpha \in \mathfrak{m}}|S_2(\alpha)|^2\int\limits_0^1\left| S_1(\alpha) \right|^2 \dif \alpha \ll \sup_{\alpha \in \mathfrak{m}}|S_2(\alpha)|^2 z\log z.
 \end{equation}

Using Weyl shift, Lemma~\ref{weylshift}, we get
\begin{equation} \label{minorarcweyl}
 S_2^4 (\alpha)  \ll x \underset{-x < r_1,r_2 < x}{\sum\sum} \min\left(x, \frac{1}{\| 6\alpha r_1r_2\|}\right).
 \end{equation}

By Dirichlet approximation, there exists a rational approximation to $\alpha$ of type
\[ \left|\alpha - \frac{a}{q}\right| \leq \frac{1}{12x^2q} \]
with $\gcd(a,q) = 1$ and $1 \leq q \leq 12x^2$. But since $\alpha \in \mathfrak{m}$, we can assume that $q > Q_1$. Hence for $-x < r_1,r_2 < x$,
\[ \left|6r_1r_2\alpha - 6r_1r_2\frac{a}{q}\right| \leq \frac{1}{2q} \]
which yields
\[ \frac{1}{\|6r_1r_2\alpha\|} \leq \frac{2}{\|6r_1r_2a/q\|}. \]
We have
\begin{equation} \label{minorarcafterweyl}
 \underset{-x < r_1,r_2 < x}{\sum\sum}\min\left(x, \frac{1}{\| 6 \alpha r_1r_2\|}\right)
\leq x \sum_{\substack{-x < r_1,r_2 < x\\q |6r_1r_2}} 1 + \sum_{\substack{-x < r_1,r_2 <x\\q \nmid 6r_1r_2 }}\frac{2}{\| 6 (a/q) r_1r_2\|}.
\end{equation}
Now, letting $q^{\prime} = q/\gcd(q,6)$ and $d(n)$ denote the number of divisors of $n$, we get
\begin{equation} \label{weyl1sttermest}
\sum_{\substack{-x < r_1,r_2 < x\\q |6r_1r_2}} 1 \ll \sum_{d|q^{\prime}} \left(\frac{x}{d} + 1\right)\left(\frac{x}{q^{\prime}/d} + 1 \right) \ll \left(d(q^{\prime})\frac{x^2}{q^{\prime}} + d(q^{\prime})x \right) \ll \frac{x^2}{(\log x)^{c_{11}}}
\end{equation}
for any $c_{11} >0$ and
\begin{equation} \label{weyl2ndtermest}
\sum_{\substack{-x < r_1,r_2 <x\\q \nmid 6r_1r_2 }}\frac{2}{\| 6(a/q) r_1r_2\|} \ll x^{2 + \varepsilon}.
\end{equation}

Therefore, combining \eqref{minorarcweyl}, \eqref{minorarcafterweyl}, \eqref{weyl1sttermest} and \eqref{weyl2ndtermest} gives
\[ \sup_{\alpha \in \mathfrak{m}}\left| S_2(\alpha) \right|^2 \ll \frac{x^2}{(\log x)^{c_{12}}}, \]
for any $c_{12} >0$.  Inserting the above into \eqref{minorarcbess}, we have
\begin{equation} \label{minorarcest}
\sum_{\substack{k \leq y\\ \mu^2(k) = 1}}\left| \int\limits_{\minorarc} S_1(\alpha) S_2(\alpha)e(-\alpha k) \dif \alpha \right|^2 \ll \frac{x^5}{(\log x)^{c_{13}}}
\end{equation}
for any $c_{13} >0$.

\section{Proof and Discussion of the Theorem}

\begin{proof} [Proof of the Theorem]
Using Cauchy's inequality, combining \eqref{startpt}, \eqref{majorarc1}, \eqref{mainterm}, \eqref{Phiest}, \eqref{majorarcerrest} and \eqref{minorarcest}, we obtain the theorem.
\end{proof}

It should be observed that the Theorem is an analogue of the Theorem in \cite{SBLZ}.  The latter was improved in \cite{BZ} using a variant of the dispersion method of J. V. Linnik \cite{Linnik}.  It would be highly desirable to have a similar improvement for the cubic polynomials.  However, this proved to be not at all straightforward, as the condition $m_1-m_2=n_1^2-n_2^2$ appearing in section 5 of \cite{BZ}, if generalized to the cubic case, would not yield such an easy correspondence to work with between $(m_1,m_2)$ and $(n_1,n_2)$ as the quadratic case does. \newline

It is natural to ask whether this method can be used for other families of polynomials, such as those of higher degree or other families of cubic polynomials. Observe that the treatment of $\Psi(k)$, defined in \eqref{Psikdef}, requires explicit expression of the numbers $n_p$ in terms of characters that reflect, in this case, the arithmetic of $\mathbb{Q}\left[ \omega, k^{1/3}\right]$, since that is the splitting field of $x^3+k$. Analogues of this will be necessary regardless of what family of polynomials we are considering. For the family $x^d+k$, $d\in \mathbb{Z}$, $d>3$ where $k$ runs in such a way that the polynomials  $x^d+k$ are irreducible, this can be quite easily mimicked, and the characters in question are power residue symbols governing the Kummer extension $\mathbb{Q}\left[e\left(1/d\right),k^{1/d}\right]/\mathbb{Q}\left[e\left(1/d\right)\right]$. However, analogues of Lemma~\ref{B&Y} are missing in all these cases except $d=4$, in which case that Lemma can be replaced by Theorem 1.2 of \cite{GZ}. The situation with other families of cubic polynomials is slightly different. The discriminant of a cubic polynomial $f(x)=x^3+ax^2+bx+c$ is $D = 18abc+a^2b^2-4b^3-4a^3c-27c^2$, and the splitting field of $f$ contains $\mathbb{Q}[\sqrt{-D}]$ as a subfield. This field is always $\mathbb{Q}\left[ \omega \right]$ for the family of irreducible $x^3+k$, allowing the use of Lemma~\ref{huxleylemma} with that field. For $a\not=0$, $b\not=0$, any family of cubic polynomials with fixed $a$ and $b$ and varying $c$, no longer has a uniform $\mathbb{Q}[\sqrt{-D}]$ for the use of that Lemma, while attempts to create a family with a fixed $\mathbb{Q}[\sqrt{-D}]$, appear to preclude a straightforward use of Lemma~\ref{bessineq} in treating the error terms from the major arcs.\newline

\noindent{\bf Acknowledgements.}  The authors were supported by an AcRF Tier 1 grant at Nanyang Technological University during this work.

\bibliography{biblio}
\bibliographystyle{amsxport}

\vspace*{.7cm}

\noindent\begin{tabular}{p{8cm}p{8cm}}
Div. of Math. Sci., School of Phys. \& Math. Sci., & Div. of Math. Sci., School of Phys. \& Math. Sci., \\
Nanyang Technological Univ., Singapore 637371 & Nanyang Technological Univ., Singapore 637371 \\
Email: {\tt S080074@ntu.edu.sg} & Email: {\tt lzhao@pmail.ntu.edu.sg} \\
\end{tabular}

\end{document}